\numberwithin{equation}{section}
\newtheorem{theorem}{Theorem}[section]
\newtheorem{corollary}[theorem]{Corollary}
\newtheorem{proposition}[theorem]{Proposition}
\newtheorem*{theorem*}{Theorem}
\theoremstyle{definition}
\newtheorem{definition}[theorem]{Definition}
\theoremstyle{remark}
\newtheorem{remark}{Remark}
\theoremstyle{remark}
\newtheorem{claim}[theorem]{Claim}
\theoremstyle{definition}
\newcommand{\NN}{\mathbb{N}}
\newcommand{\RR}{\mathbb{R}}
\renewcommand{\SS}{\mathbb{S}}
\renewcommand{\cH}{\mathcal H}
\newcommand{\cM}{\mathcal M}
\newcommand{\cS}{\mathcal S}
\newcommand{\bH}{\mathbf{H}}
\newcommand{\bx}{\mathbf{x}}
\newcommand{\by}{\mathbf{y}}
\newcommand{\bOh}{\mathbf{0}}
\DeclareMathOperator{\supp}{supp}
\newcommand{\eps}{\varepsilon}
\DeclareMathOperator{\sing}{sing}
\DeclareMathOperator{\reg}{reg}
\title{Revisiting generic mean curvature flow in $\RR^3$}
\author{Otis Chodosh} 
\address{OC: Department of Mathematics, Bldg.\ 380, Stanford University, Stanford, CA 94305, USA}
\email{ochodosh@stanford.edu}
\author{Kyeongsu Choi}
\address{KC: School of Mathematics, Korea Institute for Advanced Study, 85 Hoegiro, Dongdaemun-gu, Seoul 02455, Republic of Korea}
\email{choiks@kias.re.kr}
\author{Christos Mantoulidis} 
\address{CM: Department of Mathematics, Rice University, Houston, TX 77005, USA}
\email{christos.mantoulidis@rice.edu}
\author{Felix Schulze}
\address{FS: Department of Mathematics, Zeeman Building, University of Warwick, Gibbet Hill Road, Coventry CV4 7AL,
UK}
\email{felix.schulze@warwick.ac.uk} 
\begin{document}

\begin{abstract}
Bamler--Kleiner recently proved a multiplicity-one theorem for mean curvature flow in $\RR^3$ and combined it with the authors' work on generic mean curvature flows to fully resolve Huisken's genericity conjecture. In this paper we show that a short density-drop theorem plus the Bamler--Kleiner multiplicity-one theorem for tangent flows at the first nongeneric singular time suffice to resolve Huisken's conjecture -- without relying on the strict genus drop theorem for one-sided ancient flows previously established by the authors. \end{abstract}

\maketitle

\section{Introduction}

Mean curvature flow is the gradient flow of area. A family of hypersurfaces $M(t) \subset \RR^{n+1}$ is flowing by mean curvature flow, provided
\begin{equation} \label{eq:mcf}
\left(\tfrac{\partial}{\partial t} \bx \right)^{\perp} = \bH_{M(t)}(\bx).
\end{equation}
Here, $\bH_{M(t)}(\bx)$ denotes the mean curvature vector of $M(t)$ at $\bx$. By a simple application of the maximum principle, the flow $M(t)$ becomes singular in finite time whenever $M(0) \subset \RR^{n+1}$ is closed and embedded. There are several weak notions of mean curvature flow that allow to flow through singularities. In this paper, we use the notion of cyclic, unit-regular, integral Brakke flows. See Section \ref{sec:background} for definitions and references. 

A well-known conjecture of Huisken (see \cite[\#8]{Ilmanen:problems}) posited that generic mean curvarture flows in $\RR^3$ encounter only spherical and cylindrical singularities. Groundbreaking progress toward Huisken’s conjecture was made in the work of Colding–Minicozzi \cite{ColdingMinicozzi:generic}, who proved that spheres and cylinders are, in a sense, the only linearly stable singularity models for mean curvature flow (in all dimensions). They also showed how to construct ``broken'' mean curvature flows that avoid closed, multiplicity-one, non-spherical singular points. Subsequently, the present authors developed a one-sided perturbation and genus-drop mechanism  \cite{CCMS:generic1, CCS:generic2} which proves Huisken's conjecture under the hypothesis that all singularities arise with multiplicity one. This assumption that singularities always have multiplicity one (Ilmanen's multiplicity one conjecture \cite[\#2]{Ilmanen:problems}) was recently proven to always hold in a remarkable work of Bamler--Kleiner \cite{BamlerKleiner}. As such, when combined with our previous work, this resolved Huisken's conjecture. 

In this paper, we develop a shorter replacement for our previous works that, when combined with the work of Bamler--Kleiner, gives a new proof of Huisken's conjecture:

\begin{theorem}\label{theo:main}
Let $M^\circ \subset \RR^3$ be a closed embedded surface. There exist arbitrarily small $C^\infty$ graphs $M$ over $M^\circ$ so that any cyclic unit-regular integral Brakke flow starting at $M(0) := M$ only has only multiplicity-one spherical and cylindrical singularities.
\end{theorem}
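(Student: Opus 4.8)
The plan is to run a finite iteration that removes nongeneric singularities one at a time by one-sided perturbations, identifying the singularity models via \cite{BamlerKleiner} and guaranteeing progress and termination via a quantitative density-drop theorem. Fix $M^\circ$ and put $\Lambda_0 := \lambda(M^\circ)+1$; by upper semicontinuity of entropy and Huisken monotonicity, $\Lambda_0$ bounds the entropy of every unit-regular cyclic integral Brakke flow issuing from a sufficiently small graph over $M^\circ$. Call such a flow \emph{generic} if all of its singularities are multiplicity-one round spheres and cylinders. The \textbf{base case} of the iteration is the observation that every such flow $\cM$ with $\lambda(\cM) \le \lambda(\SS^1\times\RR)$ is automatically generic: by \cite{BamlerKleiner} every tangent flow of $\cM$ has multiplicity one, hence is a self-shrinker in $\RR^3$ of entropy $\le \lambda(\SS^1\times\RR)$; by the low-entropy classification of self-shrinkers in $\RR^3$ each such model is a plane, a sphere, or a cylinder; and a multiplicity-one plane tangent flow is a regular point by unit-regularity, so the only singularities left are multiplicity-one spheres and cylinders. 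The iteration below therefore produces, for any $\eps>0$, a $C^\infty$ graph $M$ over $M^\circ$ with $\|M-M^\circ\|_{C^\infty}<\eps$ all of whose flows are generic, by repeatedly lowering entropy until the base case applies.

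\textbf{The perturbation step.} Suppose $M$ is a small graph over $M^\circ$ and $\cM$ is a unit-regular cyclic integral Brakke flow from $M$ that is not generic; let $T$ be its \emph{first nongeneric singular time}. Applying the base-case reasoning on $[0,T)$, every singularity of $\cM$ before time $T$ is a multiplicity-one sphere or cylinder, so $\cM$ is a smooth embedded mean curvature flow away from isolated spacetime points with understood local resolution, and we may pick a regular time $t<T$ arbitrarily close to $T$ at which $M(t)$ is a nonempty smooth closed embedded surface. By \cite{BamlerKleiner} again, the finitely many nongeneric tangent flows at time $T$ are multiplicity-one self-shrinkers $\Sigma_1,\dots,\Sigma_N$, none of them a plane, sphere, or cylinder; the entropy bound $\Lambda_0$ together with compactness of the space of self-shrinkers in $\RR^3$ of bounded entropy controls $N$ and confines the $\Sigma_j$ to a compact family. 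Using the positivity of the lowest eigenfunction of each $\Sigma_j$'s stability (Jacobi) operator — and the attendant strict decrease of entropy along a forward flow issued strictly to one side of $\Sigma_j$ — I would perturb $M(t)$ near each nongeneric singular region so that the perturbed flow $\cM_s$ lies strictly to one side of $\cM$ there; by the one-sided avoidance principle for Brakke flows, $\cM_s$ then misses all of the old nongeneric singular points while staying $C^\infty$-close to $\cM$ on $[0,t]$. Transporting this to a perturbation of $M$ itself is carried out as in \cite{CCMS:generic1}.

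\textbf{Density drop and termination.} The density-drop theorem records the outcome: after the one-sided perturbation, $\cM_s$ is generic at least up to time $T$, and if $\cM_s$ is still not generic then the largest Gaussian density attained at a nongeneric singular point of $\cM_s$ — equivalently, the entropy of its worst nongeneric tangent flow — has dropped below the corresponding quantity for $\cM$ by at least $\delta = \delta(\Lambda_0)>0$, the lower bound on the drop coming from the compactness of the shrinker family. Iterating, with the $k$-th perturbation taken of size $<2^{-k}\eps$, the surfaces converge in $C^\infty$ to a graph $M$ over $M^\circ$ with $\|M-M^\circ\|_{C^\infty}<\eps$; since the relevant entropy drops by at least $\delta(\Lambda_0)$ at each non-final stage and entropy stays $\ge 1$, after finitely many stages every unit-regular cyclic integral Brakke flow from $M$ has entropy $\le \lambda(\SS^1\times\RR)$, hence is generic by the base case. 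To get the conclusion \emph{simultaneously} for every Brakke flow from the final surface, each perturbation in the iteration must be chosen to work for all Brakke flows with the given initial surface at once; this is done, as in \cite{CCMS:generic1, CCS:generic2}, by a compactness argument over the space of unit-regular cyclic integral Brakke flows with fixed initial data.

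\textbf{Main obstacle.} The real work is the density-drop theorem. Two points are delicate. First, one must show that a \emph{single} one-sided perturbation \emph{strictly} lowers the Gaussian density at every nongeneric singular point without degrading the generic behavior on $[0,T)$; this requires marrying the one-sided avoidance principle to a quantitative strict monotonicity of entropy under one-sided perturbations of self-shrinkers, and care is needed because a nongeneric $\Sigma_j$ may be noncompact (asymptotically conical), so the one-sided push must be engineered on the flow rather than naively on a single time slice. Second, making the drop \emph{uniform} — bounded below in terms of $\Lambda_0$ alone — is where compactness of self-shrinkers of bounded entropy in $\RR^3$ enters, and is precisely what lets us dispense with the strict genus-drop theorem for one-sided ancient flows from \cite{CCMS:generic1, CCS:generic2}. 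Handling all nongeneric points at time $T$, and all Brakke flows from the final surface, at once is comparatively routine but still demands a careful selection argument.
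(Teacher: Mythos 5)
Your outline is not the paper's argument, and its central step is precisely the statement you defer: the ``density-drop theorem'' you invoke -- that a \emph{single} one-sided perturbation (i) renders the flow generic up to and slightly past the old first nongeneric time $T$ and (ii) strictly decreases, by a uniform $\delta(\Lambda_0)$, the largest Gaussian density attained at \emph{any} nongeneric singular point of the perturbed flow -- is neither proved nor plausibly provable in that form. Part (i) is essentially the first step of the genus-drop program: to know that flows on one side of a nongeneric shrinker have only generic singularities nearby one needs the classification of one-sided ancient (rescaled) flows from \cite{CCS:generic2}; it does not follow from ``positivity of the lowest eigenfunction of the stability operator,'' especially for noncompact asymptotically conical $\Sigma_j$. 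Part (ii) is worse: the perturbed flow may develop \emph{new} nongeneric singularities at later times or at other locations, and nothing ties their Gaussian densities to the density at the old singular point -- Huisken monotonicity compares scales at a fixed spacetime point of a fixed flow, not singularities of different flows at different points, so the only a priori bound on a new nongeneric density is the entropy bound $\Lambda_0$. Without (ii) your induction has no decreasing quantity and need not terminate (this is exactly the role genus played in \cite{CCMS:generic1, CCS:generic2}, which you are trying to dispense with). There is also a conflation at the end between the entropy of the flow and the worst nongeneric density: the iteration cannot drive $\lambda(\cM)$ below $\lambda(\SS^1\times\RR)$, and termination must instead be phrased via the Bernstein--Wang gap for the nongeneric densities themselves.

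The paper's actual mechanism avoids your global claim entirely. It takes a monotone one-parameter family $M_s$ of initial surfaces and proves a \emph{local, comparative} density drop: if $X_i\to X_0$ are first nongeneric points of $\cM_{s_i}$ with $s_i\to s_0$, then $\limsup_i\Theta_{\cM_{s_i}}(X_i)\le\Theta_{\cM_{s_0}}(X_0)-\eta_0$, proved by blowing up at scale $d_p(X_i,X_0)$, using Bamler--Kleiner/Ilmanen multiplicity one \emph{only at the first nongeneric time} to identify the central tangent flow, and then applying a compactness/Frankel/geometric-characterization argument (Proposition \ref{prop:density-drop}) to ancient flows trapped on one side of a nongeneric shrinker. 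This statement says nothing about whether nearby flows are generic near $X_0$; instead, slicing the first nongeneric points by density bands of width $\eta_0/2$ shows each band is discrete, hence the set of bad parameters $s$ is countable, and a generic choice of $s$ (plus uniqueness of the flow in the absence of nongeneric singularities) finishes the proof with no iteration and no control of ``new'' singularities. If you want to salvage your iterative scheme, you would have to either reinstate the one-sided ancient flow classification and a genus-type global monotone quantity, or reformulate your density drop as the paper does -- as a statement about nearby flows in a continuous family at nearby spacetime points -- at which point the counting argument, not induction, is what closes the proof.
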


The main contribution of this paper is to circumvent the genus-drop theorem; we employ, in its place, a rather short density-drop theorem inspired by our other work on the generic regularity of higher-dimensional mean curvature flow under entropy bounds \cite{CCMS:generic-low-ent, CMS:generic-low-ent-2} and on the generic regularity of area-minimizing currents \cite{CMS:minimizing-9-10, CMS:minimizing-codim}. Its adaptation to our current setting is nonetheless quite subtle: on the one hand, the argument is global in spacetime in nature, while on the other hand, it is sensitive to us only invoking the multiplicity-one result at the \emph{first} nongeneric time.

\begin{remark} \label{rema:implications}
	 Let $M \subset \RR^3$ be closed embedded surface and $\cM$ be a cyclic unit-regular integral Brakke flow starting from $M$. Such $\cM$ can be produced by Ilmanen's elliptic regularization scheme \cite{Ilmanen:elliptic}. Assume that $\cM$ has the property arranged in Theorem \ref{theo:main}, i.e.:
	\begin{equation}  \label{eq:implications} \tag{$\dagger$}
		\cM \text{ only has multiplicity-one spherical and cylindrical singularities}.
	\end{equation}
	Here are some well-known implications of $(\dagger)$:
	\begin{enumerate}
		\item[(a)] The work of Hershkovits--White \cite{HershkovitsWhite:nonfattening}, combined with the resolution of the mean convex neighborhood conjecture by Choi--Haslhofer--Hershkovits \cite{ChoiHaslhoferHershkovits}, yields that the level-set flow of $M$ is non-fattening and that the innermost and outermost flows starting at $M$ agree, with their spacetime support equal to the level-set flow of $M$. Moreover, $\cM$ is the \emph{unique} cyclic unit-regular integral Brakke flow starting at $M$ by \cite[Appendix G]{CCMS:generic1}. Therefore $\cM$, the innermost, and the outermost flows starting at $M$ all coincide and are almost-regular in the sense of Bamler--Kleiner \cite[Definition 2.6]{BamlerKleiner} (cf.\ \cite[Lemma 7.8]{BamlerKleiner}).
		\item[(b)] The work of Daniels-Holgate \cite{Daniels-Holgate} implies that there exists a mean curvature flow with surgery starting from $M$ (this result uses the results in point (a) as well as the surgery results in the mean convex case \cite{Brendle:inscribed-sharp,BrendleHuisken:R3,HaslhoferKleiner:estimates,HaslhoferKleiner:surgery}).
		\item[(c)] Consider any sequence $M_i \to M$, say in $C^1$, and a corresponding sequence $\cM_i$ of cyclic unit-regular integral Brakke flows starting at $M_i$. Since the class of cyclic unit-regular integral Brakke flows is closed under weak convergence of Brakke flows, it follows from (a) above that $\cM_i \rightharpoonup \cM$. Furthermore, it follows from the classification of low entropy shrinkers \cite{BernsteinWang:TopologicalProperty} (or, alternatively, the proof of the mean convex neighborhood conjecture--see \cite[Propsition 2.3]{SchulzeSesum}) that, for sufficiently large $i$, $\cM_i$ also only has only multiplicity-one spherical and cylindrical singularities. In particular, the set of $M$ with corresponding $\cM$ satisfying \eqref{eq:implications} is both dense (per Theorem \ref{theo:main}) and open.
	\end{enumerate}
\end{remark}

\subsection{Other works}
A number of works other than \cite{BamlerKleiner} have been very influential, if not outright invoked, in Theorem \ref{theo:main}. We highlight in particular some works of Bernstein, Brendle, Colding, Haslhofer, Hershkovits, Ilmanen, Minicozzi, Wang, and White in approximate chronological order: \cite{Ilmanen:elliptic, White:topology-weak, White:size, White:nature, White:cyclic, ColdingMinicozzi:generic, ColdingMinicozziIlmanenWhite, ColdingIlmanenMinicozzi, ColdingMinicozzi:sing-generic, Brendle:genus0, Wang:ends-conical, BernsteinWang:TopologicalProperty, ChoiHaslhoferHershkovits, HershkovitsWhite:nonfattening}.

\subsection{The genus-drop approach} \label{subsec:genus-drop}
We recall the genus-drop approach of  \cite{CCMS:generic1, CCS:generic2}. This is not used anywhere else in this paper. 

Consider a one-parameter family of mean curvature flows $\cM_s$ obtained by flowing monotone (``one-sided'') perturbations $M_s$, $s = o(1)$, of the initial surface $M_0 := M$. Consider a family of Brakke flows $(\cM_s)$ starting at $(M_s)$. Assume that $\cM_0$ develops a multiplicity-one singularity at $(\bx_0, t_0)$ which is \emph{not} spherical or cylindrical. Any tangent flow of $\cM_0$ at $(\bx_0, t_0)$ is a multiplicity-one homothetically shrinking flow that is not a sphere or a cylinder (see Section \ref{subsec:shrinkers}). By the avoidance principle, the corresponding blow up limits of nearby $\cM_s$'s at $(\bx_0, t_0)$ will be ancient flows on either side of the central shrinking flow. 

The first step is to now study the rescaled mean curvature flows of these blow ups. By developing a PDE classification tool for solutions with fixed-sign speed (positive or negative) of the rescaled mean curvature flow on one side of non-spherical and non-cylindrical shrinkers, we show that the blow-up limits of the nearby $\cM_s$ move monotonically in the sense of rescaled flows (i.e.~in a shrinker mean convex sense in the original scale). This built on previous work of the authors in \cite{ChoiMantoulidis}, inspired by \cite{ADS, ADS2}. One then  deduces from this, as in \cite{White:size}, that the $(\cM_s)_{s \neq 0}$ only develop multiplicity-one spherical and cylindrical singularities and have genus zero \emph{near} $(\bx_0, t_0)$. 

The second step is to prove a globalized and iterable genus-drop result. By Brendle's genus zero classification theorem \cite{Brendle:genus0}, and a localization of White's genus monotonicity \cite{White:topology-weak}, one may show that there has been a (strict) genus drop of $\cM_s(t)_{s \neq 0}$ on approach to time $t=t_0$. This can be iterated finitely many times, bounded by the genus of $M_0 := M$, to ultimately get:
\begin{quote}
\emph{For generic closed embedded surfaces $M(0) \subset \RR^3$, mean curvature flow has only spherical and cylindrical singularities for as long as its singularities have multiplicity one.}	
\end{quote}
In particular, Huisken's conjecture was reduced to Ilmanen's multiplicity-one conjecture (see \cite[\#2]{Ilmanen:problems}), which was eventually resolved by Bamler--Kleiner \cite{BamlerKleiner}. 

\subsection{The density-drop approach} \label{subsec:density-drop}

As before, we let $\cM_s$ be a monotone one-parameter family of flows, and we study blow-ups of this family at multiplicity-one singular points modeled by non-spherical, non-cylindrical shrinkers. 

Let $\Sigma$ be any such shrinker. The key is Proposition \ref{prop:geometric-characterization}, a qualitative geometric result characterizing such $\Sigma$: any translate of its shrinking spacetime track must ``cross'' its original, untranslated shrinking track. This is a special feature of shrinkers $\Sigma$ that are not spheres or cylinders. Clearly, it is not true for spheres and cylinders.

Our density-drop result, Proposition \ref{prop:density-drop}, quantifies Proposition \ref{prop:geometric-characterization}. Let $\Theta_\Sigma$ be the Huisken density of the singular point modeled on $\Sigma$. Proposition \ref{prop:density-drop} shows that every ancient flow to one side of the shrinking flow of $\Sigma$, with entropy no larger than $\Theta_\Sigma$, has Huisken density $\leq \Theta_\Sigma - \eta_0$ at all singular points (except the origin). Here, $\eta_0$ depends only on background a priori genus and entropy bounds, but not $\Sigma$. It follows from Proposition \ref{prop:density-drop}, and a monotonicity argument, that all sufficiently nearby points in the pre-blow-up picture have Huisken density $\leq \Theta_\Sigma - \eta_0$. Thus, our central singular point is isolated among singular points with density $\geq \Theta_\Sigma - \tfrac12 \eta_0$. 

We now globalize. For each $\cM_s$, $s=o(1)$, we may assume the existence of singular points that are not modeled by multiplicity-one spheres or cylinders. (Otherwise, we are done.) Let us then restrict our attention to the first (ordered by time) such singular point(s). Existence follows by work of Bernstein--Wang \cite{BernsteinWang:TopologicalProperty} (see Proposition \ref{prop:stability-generic}). The work of Ilmanen \cite{Ilmanen:singularities} and Bamler--Kleiner's resolution of the multiplicity-one conjecture \cite{BamlerKleiner} (see Theorem \ref{theo:ilmanen-bamler-kleiner}) imply that these singular points are modeled by multiplicity-one shrinkers that are not spheres or cylinders. Therefore, Proposition \ref{prop:density-drop} (density-drop) applies and guarantees that these points are isolated as long as we suitably stratify by density. In particular, by elementary considerations, the entire set of such points turns out to be finite, so we can clearly choose $\cM_s$'s that miss it.

\subsection{Comparison} The genus-drop approach to resolve Huisken's conjecture is significantly longer than the density-drop in the current paper, but it also gives a refined local picture. In particular, together with the Bamler--Kleiner multiplicity-one theorem and their compactness theorem for bounded almost regular flows (\cite[Theorem 1.7]{BamlerKleiner}), inner/outermost flows experience strict genus drop after every nongeneric singular point (thus bounding the number of nongeneric singular points by the initial genus) see, e.g., the proof of \cite[Theorem 1.9 (c)]{BamlerKleiner}.

\subsection{Organization} In Section \ref{sec:background}, we recall some standard background notions about smooth and weak mean curvature flows that are part of our setup. In Section \ref{sec:ingredients}, we list the various technical ingredients from \cite{ColdingMinicozzi:compactness-shrinkers, ChengZhou:volume-estimate, BernsteinWang:TopologicalProperty, CCMS:generic1, CCMS:generic-low-ent, BamlerKleiner} that are necessary for our proof. In Section \ref{sec:proof} we give the proof of Theorem \ref{theo:main}.

\subsection{Acknowledgements} OC was supported by a Terman Fellowship and an NSF grant (DMS-2304432). KC was supported by the KIAS Individual Grant MG078902. CM was supported by grant NSF DMS 2403728. We thank Richard Bamler and Bruce Kleiner for their interest in this work, and the anonymous referees for helpful suggestions.

\section{Background} \label{sec:background}

\subsection{Spacetime} \label{subsec:spacetime}

It will prove to be very convenient to study our flows in spacetime, $\RR^3 \times \RR$. To that end, we will refer to points such as
\[ X = (\bx, t) \in \RR^3 \times \RR \]
as spacetime points, and we will utilize the time function
\[ \mathfrak{t}(\bx, t) := t. \]
 Furthermore we denote parabolic distance in spacetime using
\[d_p(X,Y) = d_p\big((\bx, t), (\by,s)\big):= \sqrt{ \|\bx-\by\|^2+ |t-s|}, \]
and the natural parabolic dilation around the spacetime origin $(\bOh,0)$, by $\lambda > 0$, using
\[
\operatorname{ParDil}_\lambda : \RR^3 \times \RR \to \RR^3 \times \RR, \; \operatorname{ParDil}_\lambda(\bx, t) = (\lambda^{-1} \bx, \lambda^{-2} t).
\]

\subsection{$F$-functional and density}

Let us begin by recalling a few basic notions in the smooth setting. First, for smooth surfaces $M \subset \RR^{3}$ one has the $F$-functional
\begin{equation} \label{eq:f-functional}
	F(M) = (4\pi)^{-1} \int_M e^{-\tfrac14 |\bx|^2} \, d\cH^2(\bx),
\end{equation}
which in turn gives rise to the Colding--Minicozzi entropy 
via
\begin{equation} \label{eq:f-functional-entropy}
	\lambda(M) = \sup_{\substack{\bx_0\in\RR^{3}\\t_0>0}} F \left( \frac{1}{\sqrt{t_0}} (M - \bx_0) \right).
\end{equation} 
Moreover, for a smooth mean curvature flow with bounded area ratios $\cM : t \mapsto M(t)$, the $F$-functional also gives rise to the density function, defined as
\begin{equation} \label{eq:density-r}
	\Theta_\cM(X, r) = F\left( \frac{1}{r} (M(t - r^2) - \bx) \right), \; X \in \RR^{3} \times \RR, \; r > 0,
\end{equation}
which is nondecreasing in $r$ by Huisken's well-known monotonicity formula \cite{Huisken:asymptotic}. In particular,
\begin{equation} \label{eq:density}
	\Theta_\cM(X) = \lim_{r \to 0} \Theta_\cM(X, r), \; X \in \RR^{3} \times \RR,
\end{equation}
is well-defined.

\subsection{Shrinkers} \label{subsec:shrinkers}

The spheres and cylinders referred to in Theorem \ref{theo:main} are types of self-similarly shrinking (or shrinkers, for short) singularity models for mean curvature flow. 

\begin{definition}
A surface $\Sigma \subset \RR^3$ is said to be a shrinker if it satisfies $\bH + \tfrac12 \bx^\perp = \bOh$, where $\bH$ is the mean curvature vector of $\Sigma$.
\end{definition}

Equivalently, $\Sigma$ is a shrinker if $t\mapsto \sqrt{-t}\, \Sigma$ is a mean curvature flow for $t<0$. 

\begin{definition}
We denote
\begin{align*} 
	\cS & := \{ \Sigma \subset \RR^3 \text{ embedded shrinker with } \lambda(\Sigma) < \infty \}, \\
	\cS^* & := \cS \setminus \{ \text{planes} \}, \\
	\cS^\textnormal{gen} & := \{ \SS^2(2) \} \cup \left\{ O(\SS^1(\sqrt{2}) \times \RR^{1}) \in \cS : O \in O(3) \right\}.
\end{align*}
\end{definition}

The elements spheres and cylinders referred to in Theorem \ref{theo:main} are the elements of $\cS^{\textnormal{gen}}$, whose radii were chosen so that $\cS^\textnormal{gen} \subset \cS$.

\subsection{Brakke flows} \label{subsec:brakke.flows}

For our study of singular mean curvature flows, we need to work with weaker objects than smooth hypersurfaces, namely varifolds in $\RR^3$ (see \cite{Simon:GMT}). The corresponding weak notion of mean curvature flows are Brakke flows in $\RR^3$, which we recall below (cf.\ \cite{Brakke,Ilmanen:elliptic}). All our varifolds and Brakke flows are always taken to be $2$-dimensional.

\begin{definition} 
	A ($2$-dimensional) \emph{integral Brakke flow} in $\RR^{3}$ is a $1$-parameter family of Radon measures $(\mu(t))_{t \in I}$ over an interval $I \subset \RR$ so that:
	\begin{enumerate}
		\item For almost every $t \in I$, there exists an integral $n$-dimensional varifold $V(t)$ with $\mu(t) = \mu_{V(t)}$ so that $V(t)$ has locally bounded first variation and has mean curvature $\bH$ orthogonal to $\textrm{Tan}(V(t),\cdot)$ almost everywhere.
		\item For a bounded interval $[t_1,t_2] \subset I$ and any compact set $K\subset \RR^{3}$, 
		\[
			\int_{t_1}^{t_2}\int_K (1+|\bH|^2) d\mu(t) dt < \infty.
		\]
		\item If $[t_1,t_2] \subset I$, $f \in C^{1}_{c}(\RR^{3}\times [t_1,t_2])$, and $f\geq 0$, then Brakke's inequality holds:
		\[
			\int f(\cdot,t_{2}) \, d\mu(t_{2}) - \int f(\cdot,t_{1}) \, d\mu(t_{1}) \leq \int_{t_{1}}^{t_{2}} \int\left( - |\bH|^{2} f + \bH \cdot \nabla f + \tfrac{\partial }{\partial t} f \right) \, d\mu(t) \, dt.
		\]
	\end{enumerate}
\end{definition}

We will often write $\cM$ for a Brakke flow $(\mu(t))_{t \in I}$, with the understanding that we're referring to the family $I \ni t\mapsto \mu(t)$ of measures satisfying Brakke's inequality. In this case we write $\cM(t)$ to mean $\mu(t)$.

In this weaker setting setting, \eqref{eq:f-functional}, \eqref{eq:f-functional-entropy} extend to varifolds by looking at their induced Radon measures, and the monotonicity underlying \eqref{eq:density-r}, \eqref{eq:density} extends to Brakke flows in $\RR^{3}$ with bounded area ratios (cf.\ \cite[Lemma 7]{Ilmanen:singularities}). 

Furthermore, we slightly abuse notation and still denote with $\operatorname{ParDil}_\lambda$ the standard parabolic dilation of a Brakke flow around the spacetime origin $(\bOh,0)$ by a factor $\lambda>0$. This is obtained by pushing forward using the parabolic dilation map from Section \ref{subsec:spacetime}. 

\subsection{$F$-stationary varifolds}

The shrinker condition is also equivalent to the following (see \cite{Ilmanen:Trieste}, \cite[\S3]{ColdingMinicozzi:generic}):
\begin{itemize}
\item $\Sigma$ is a minimal hypersurface for the metric $e^{-\frac{1}{4} |\bx|^{2}}g_{\RR^{3}}$, or
\item $\Sigma$ is a critical point of the $F$-functional from \eqref{eq:f-functional} among compactly supported deformations, as well as translations and dilations.
\end{itemize}

Definition \ref{defi:f-stationary-varifold} below uses precisely this insight to correctly generalize the notion of shrinkers to the varifold setting.

\begin{definition} \label{defi:f-stationary-varifold}
	A varifold $V$ in $\RR^{3}$ is called $F$-stationary if is stationary with respect to the conformally flat metric $e^{-|\bx|^2/4} g_{\RR^{3}}$ on $\RR^{3}$.
\end{definition}

Of course, the immediate relevance of this in taking tangent flows:

\begin{definition} \label{defi:tangent-flow}
	Let $\cM$ be an integral Brakke flow in $\RR^3$, $X \in \RR^3 \times \RR$, and $\lambda_i \to 0$. If 
	\[ \tilde \cM_i := \operatorname{ParDil}_{\lambda_i}(\cM - X) \rightharpoonup \tilde \cM, \]
	then we call $\tilde \cM$ a tangent flow to $\cM$ at $X$. It follows from the monotonicity formula that, for $t < 0$, $\tilde \cM$ coincides with a shrinking integral Brakke flow
	\[ \cM_V(t) = \begin{cases} \sqrt{-t} V & t < 0, \\ 0 & t \geq 0, \end{cases} \]
	for some $F$-stationary integral varifold $V$ in $\RR^3$. See \cite[Lemma 8]{Ilmanen:singularities} or \cite{White:stratification}.
\end{definition}

\subsection{Regular, singular, generic singular points} \label{subsec:reg-sing-gensing}

Briefly recall that:

\begin{definition} 
	An integral Brakke flow $\cM$ is said to be:
	\begin{itemize}
		\item cyclic if, for a.e.\ t, $\mu(t) = \mu_{V(t)}$ for an integral varifold $V(t)$ whose associated rectifiable mod-2 flat chain $[V(t)]$ has $\partial [V(t)] = 0$ (see \cite{White:cyclic});
		\item unit-regular if $\Theta_\cM(X) = 1$ implies that there exists a forward-backward parabolic ball around $X$ in which $\cM$ is a smooth connected multiplicity-one flow.
	\end{itemize}
\end{definition}

\begin{definition}[{\cite[Definition 1.6]{CCMS:generic-low-ent}}] \label{defi:reg-sing-gen}
	Let $\cM$ be a unit-regular integral Brakke flow in $\RR^3$.
		\begin{enumerate}
			\item[(a)] We denote by $\reg \cM$ the set of $X \in \supp \cM$ for which there exists a forward-backward parabolic ball centered at $X$ inside of which $\cM$ is a smooth connected multiplicity-one flow.
			\item[(b)] We denote $\sing \cM = \supp \cM \setminus ((\supp M(0) \times \{0\}) \cup \reg \cM)$.
			\item[(c)] We denote by $\sing_{\textnormal{gen}} \cM$ the set of $X \in \sing \cM$ so that all\footnote{If some tangent flow is a multiplicity one element of $\cS^{\textnormal{gen}}$, then all are (\cite{ColdingIlmanenMinicozzi,ColdingMinicozzi:uniqueness-tangent-flow}; cf.\ \cite{BernsteinWang:high-mult-unique}).} tangent flows to $\cM$ at $X$ are, for $t < 0$, coincident with some $\cM_\Sigma$, $\Sigma \in \cS^\textnormal{gen}$ (see Definition \ref{defi:tangent-flow}).
		\end{enumerate}
\end{definition}

\section{Technical ingredients} \label{sec:ingredients}

\subsection{Shrinker compactness} 

The following compactness theorem follows from the work of Colding--Minicozzi \cite{ColdingMinicozzi:compactness-shrinkers} and Cheng--Zhou \cite{ChengZhou:volume-estimate}:

\begin{theorem} \label{theo:shrinker-compactness}
	Fix $g \in \NN$, $\Lambda > 0$. If we have a sequence $\Sigma_i \in \cS$ with
	\[ \operatorname{genus}(\Sigma_i) \leq g, \; F(\Sigma_i) \leq \Lambda, \]
	then, after passing to a subsequence, we can find $\Sigma \in \cS$ such that
	\[ \Sigma_i \to \Sigma \text{ in } C^\infty_{\textnormal{loc}}, \; \operatorname{genus}(\Sigma) \leq g, \; F(\Sigma_i) \to F(\Sigma). \]
\end{theorem}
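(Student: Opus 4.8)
The assertion is a compactness result for the space $\cS$ of embedded shrinkers with bounded genus and bounded entropy. The plan is to combine two ingredients: (i) the local curvature/area estimates and compactness theorem of Colding--Minicozzi for self-shrinkers (established under a fixed genus bound and an entropy bound, which give a uniform bound on the $L^2$-norm of the second fundamental form on compact sets), and (ii) the Cheng--Zhou volume (area-ratio) estimate, which upgrades the entropy bound to a uniform Euclidean area bound on fixed balls and rules out loss of area at infinity. First I would apply the Colding--Minicozzi compactness machinery: given $\Sigma_i \in \cS$ with $\genus(\Sigma_i) \le g$ and $\lambda(\Sigma_i) \le \Lambda$, one extracts a subsequence converging in $C^\infty_{\mathrm{loc}}$, possibly with multiplicity, to a (smooth, embedded, properly immersed) limiting shrinker $\Sigma$. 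The genus bound is what prevents curvature from concentrating: away from finitely many points the convergence is smooth and graphical, and a point-picking/removable-singularity argument (as in Colding--Minicozzi) shows there are in fact no bad points in dimension two under the genus bound.

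Next I would promote this to the stated conclusion, namely that $\Sigma \in \cS$ (so in particular $\lambda(\Sigma) < \infty$, $\Sigma$ is embedded, and the convergence has multiplicity one) and that $F(\Sigma_i) \to F(\Sigma)$. Membership $\lambda(\Sigma) \le \Lambda < \infty$ follows from lower semicontinuity of entropy under the convergence, and embeddedness is preserved because any self-intersection or higher multiplicity in the limit would, by the maximum principle for the shrinker equation (Frankel-type property) or by an Allard-type argument, force the pre-limit surfaces to be non-embedded — contradiction; this is exactly where the embeddedness hypothesis in the definition of $\cS$ is used. For the convergence $F(\Sigma_i) \to F(\Sigma)$, the key point is a uniform decay estimate for the Gaussian integral $\int_{\Sigma_i \setminus B_R} e^{-|x|^2/4}\,d\cH^2$ that is independent of $i$. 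This is precisely where the Cheng--Zhou estimate enters: it gives a uniform polynomial (in fact Euclidean-volume) growth bound $\cH^2(\Sigma_i \cap B_R) \le C(\Lambda) R^2$, so that the Gaussian weight makes the tails uniformly small; combined with $C^\infty_{\mathrm{loc}}$ convergence on $B_R$ and the dominated convergence theorem, $F(\Sigma_i) \to F(\Sigma)$ follows. (The same tail bound also ensures $\Sigma$ itself has finite $F$ and finite entropy, i.e.\ $\Sigma \in \cS$.)

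The main obstacle is the no-concentration-of-area / no-loss-of-mass dichotomy: a priori the $C^\infty_{\mathrm{loc}}$ limit could be attained with multiplicity $> 1$, or area could escape to infinity, either of which would break $F(\Sigma_i) \to F(\Sigma)$. Controlling the multiplicity is handled by embeddedness plus the structure of shrinkers (two embedded shrinkers cannot be tangent or nested without coinciding, by the strong maximum principle for the elliptic shrinker equation $H + \tfrac12 \langle x, \nu\rangle = 0$), and controlling the behavior at infinity is handled by the Cheng--Zhou area-ratio bound. I would organize the write-up so that the genus bound does the local curvature work (Colding--Minicozzi), the embeddedness does the multiplicity-one upgrade, and the entropy bound — through Cheng--Zhou — does the work at infinity; once all three are in place, $F$-convergence is a routine consequence of smooth local convergence plus uniform Gaussian tail decay.
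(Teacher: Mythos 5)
Your overall route is the same one the paper takes: the paper offers no proof beyond citing Colding--Minicozzi's smooth compactness theorem for embedded shrinkers in $\RR^3$ with bounded genus and uniform quadratic area bounds, together with Cheng--Zhou's volume estimates, and your plan is essentially an expansion of that citation. Two small remarks on the periphery: the entropy bound $\lambda(\Sigma_i)\leq\Lambda$ by itself already gives the uniform Euclidean area ratios $\cH^2(\Sigma_i\cap B_R(\bx_0))\leq C\Lambda R^2$ (compare $F$ at center $\bx_0$ and scale $R$ with the area of $\Sigma_i\cap B_R(\bx_0)$), which is both the hypothesis needed for Colding--Minicozzi and the source of the uniform Gaussian tail bound; Cheng--Zhou's contribution is rather the properness/finite weighted-area/polynomial growth equivalences that let one work with the class $\cS$ as defined by $\lambda<\infty$ and conclude the limit lies in $\cS$. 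Your treatment of the tails and of $F(\Sigma_i)\to F(\Sigma)$ via local smooth convergence plus dominated convergence is correct.

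The one step in your write-up that would fail as stated is the multiplicity-one argument. Ruling out multiplicity $\geq 2$ is \emph{not} a consequence of embeddedness plus the strong maximum principle or a Frankel-type property: higher-multiplicity convergence does not produce two distinct tangent or nested shrinkers, it produces several sheets of the \emph{same} sequence collapsing onto a single limit, and embedded surfaces can certainly do this (think of embedded minimal surfaces with small necks converging with multiplicity two to a plane; genus and area bounds alone do not forbid the analogous picture a priori). The actual mechanism in Colding--Minicozzi's compactness proof is different: if the convergence had multiplicity $\geq 2$ (or if curvature concentrated at a point), the gap between consecutive sheets yields, after renormalization, a positive solution of the linearized shrinker (stability) equation $Lu=0$ on the limit, which forces the limit to be $L$-stable; this contradicts their theorem that no complete self-shrinker with polynomial volume growth is $L$-stable. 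So either quote the Colding--Minicozzi compactness theorem in the form that already includes multiplicity-one smooth convergence, or reproduce the positive-Jacobi-field/no-$L$-stable-shrinker argument; the maximum-principle argument you propose does not close this step, and multiplicity one is exactly what the conclusion $F(\Sigma_i)\to F(\Sigma)$ hinges on.
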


\subsection{Entropy gap} \label{subsec:entropy-gap}

The entropy gap result below, due to Bernstein--Wang \cite{BernsteinWang:TopologicalProperty}, establishes a definite entropy gap between self-shrinking cylinders and all other shrinkers in $\RR^3$. We also include below Stone's explicitly computation in \cite{Stone} of entropies of spheres and cylinders:

\begin{theorem}  \label{theo:entropy-gap}
	There exists a universal $\delta_0 > 0$ such that every $\Sigma \in \cS^* \setminus \cS^{\textnormal{gen}}$ satisfies:\footnote{Note that $\lambda$ is invariant under rescaling, so we need not rescale our spherical factors.}
\[
	1 = \lambda(\RR^2) < \lambda(\SS^2) = \tfrac{4}{e} \approx 1.47 < \lambda(\SS^1 \times \RR) = \sqrt{\tfrac{2\pi}{e}} \approx 1.52 \leq \lambda(\Sigma) - \delta_0.
\]
\end{theorem}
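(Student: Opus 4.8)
The statement has two independent parts: the explicit entropy values (due to Stone) and the existence of a universal gap (due to Bernstein--Wang), so I would treat them separately. For the values, recall that for a shrinker $\Sigma$ the supremum in \eqref{eq:f-functional-entropy} is attained at the canonical center and scale, so $\lambda(\Sigma) = F(\Sigma)$; hence it suffices to evaluate \eqref{eq:f-functional} on $\RR^2$, on $\SS^2(2)$, and on $\SS^1(\sqrt2)\times\RR$. For the plane this is the Gaussian normalization $(4\pi)^{-1}\int_{\RR^2}e^{-|\cdot|^2/4}\,d\cH^2 = 1$. On $\SS^2(2)$ one has $|\cdot|^2 \equiv 4$ and $\cH^2 = 16\pi$, so $F = (4\pi)^{-1}\cdot 16\pi\cdot e^{-1} = 4/e$. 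On the cylinder $\SS^1(\sqrt2)\times\RR$, Fubini factors the integral into the $\SS^1(\sqrt2)$-factor (on which $|\cdot|^2\equiv 2$) times a one-dimensional Gaussian, giving $(4\pi)^{-1}\bigl(2\pi\sqrt2\,e^{-1/2}\bigr)\bigl(2\sqrt\pi\bigr) = \sqrt{2\pi/e}$. Since $1 < 4/e < \sqrt{2\pi/e}$ numerically, this gives every inequality in the displayed chain except the last.

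For the last inequality I would argue by contradiction, essentially reproducing the Bernstein--Wang argument. If no universal $\delta_0>0$ exists, there is a sequence $\Sigma_i \in \cS^*\setminus\cS^{\textnormal{gen}}$ with $\lambda(\Sigma_i)\to\lambda(\SS^1\times\RR)$ from above. The qualitative input is the Bernstein--Wang classification \cite{BernsteinWang:TopologicalProperty}: every $\Sigma\in\cS$ with $\lambda(\Sigma)\le\lambda(\SS^1\times\RR)$ is a plane, the round sphere $\SS^2(2)$, or a rotated standard cylinder; in particular $\cS^*\setminus\cS^{\textnormal{gen}}$ contains no shrinker at or below the threshold. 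To promote this to a gap, I would extract a subsequential $C^\infty_{\mathrm{loc}}$ limit $\Sigma_\infty$ of the $\Sigma_i$ with no loss of $F$, via Theorem \ref{theo:shrinker-compactness}. Then $\Sigma_\infty\in\cS$ and $\lambda(\Sigma_\infty) = F(\Sigma_\infty) = \lim F(\Sigma_i) = \lambda(\SS^1\times\RR)$, so by the classification together with the entropy values computed above, $\Sigma_\infty$ must be a rotated standard cylinder. But then, for $i$ large, $\Sigma_i$ is a complete embedded shrinker that is $C^\infty$-close to that cylinder on an arbitrarily large compact set, and the rigidity of cylindrical shrinkers (Colding--Ilmanen--Minicozzi, via the classification of shrinkers near the cylinder) forces $\Sigma_i$ to equal that cylinder, contradicting $\Sigma_i\notin\cS^{\textnormal{gen}}$.

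The one nontrivial step — and where the real content of the theorem lies — is the extraction of the limit $\Sigma_\infty$: Theorem \ref{theo:shrinker-compactness} requires a uniform genus bound on the $\Sigma_i$, which does \emph{not} follow from the entropy bound alone. Closing this gap requires the structure theory of self-shrinkers of small entropy in $\RR^3$ — boundedly many ends (in terms of entropy), each asymptotically conical or cylindrical, and a posteriori bounded genus along the sequence once positive-genus shrinkers below the threshold have been excluded — which is part of the Bernstein--Wang circle of ideas. Granting that compactness input, the contradiction above closes and yields the universal $\delta_0 > 0$.
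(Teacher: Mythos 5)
The paper does not actually prove this statement: it is imported wholesale, with the explicit entropy values attributed to Stone \cite{Stone} and both the low-entropy classification and the definite gap attributed to Bernstein--Wang \cite{BernsteinWang:TopologicalProperty}. Your computation of $\lambda(\RR^2)=1$, $\lambda(\SS^2(2))=4/e$, and $\lambda(\SS^1(\sqrt{2})\times\RR)=\sqrt{2\pi/e}$ (using the Colding--Minicozzi fact that $\lambda=F$ for shrinkers) is correct and reproduces Stone's computation. So the only content at stake is the final inequality, and there your argument has a genuine gap --- one you partly flag yourself, but your proposed repair does not close it.

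Your contradiction argument needs compactness for a sequence $\Sigma_i\in\cS^*\setminus\cS^{\textnormal{gen}}$ with $\lambda(\Sigma_i)\to\lambda(\SS^1\times\RR)$, and Theorem \ref{theo:shrinker-compactness} requires a uniform genus bound, which does not follow from the entropy bound. The suggested fix --- that genus is bounded ``a posteriori \ldots once positive-genus shrinkers below the threshold have been excluded'' --- is circular: the $\Sigma_i$ have entropy strictly \emph{above} the threshold, so the Bernstein--Wang classification at or below $\lambda(\SS^1\times\RR)$ says nothing about $\operatorname{genus}(\Sigma_i)$, and no genus-from-entropy estimate is available from anything quoted in this paper. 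Without that input you cannot extract a $C^\infty_{\textnormal{loc}}$ limit with convergence of $F$, so the identification of the limit as a cylinder and the Colding--Ilmanen--Minicozzi rigidity step never get started. Note also that you already invoke the full Bernstein--Wang classification as a black box, which is the deepest ingredient, and their own derivation of the gap goes through their structure theory for low-entropy shrinkers (ends asymptotically conical or cylindrical, etc.) rather than through genus-bounded shrinker compactness. So the honest options are either to cite the gap statement itself, as the paper does, or to supply the missing compactness/structure input, which your sketch defers without justification.
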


The gap result builds on previous important work, including Brendle's classification of genus zero self-shrinkers \cite{Brendle:genus0} and Colding--Ilmanen--Minicozzi--White's proof that the round sphere has minimal entropy among all closed self-shrinkers \cite{ColdingMinicozziIlmanenWhite}. 

\subsection{Stability of generic singularities}\label{subsec:stability-generic}

We recall the following result that, in $\RR^3$, follows from elementary density upper semicontinuity considerations and the Bernstein--Wang entropy gap from Theorem \ref{theo:entropy-gap}. 

\begin{proposition}[{\cite[Proposition A.1]{CCMS:generic-low-ent}}] \label{prop:stability-generic}
Consider cyclic unit-regular integral Brakke flows $\cM_i\rightharpoonup \cM$ in $\RR^3$ with $X_i \in \sing\cM_i$ converging to $X \in \sing_{\textnormal{gen}} \cM$. Then, $X_i \in \sing_\textnormal{gen}\cM_i$ for all sufficiently large $i$.
\end{proposition}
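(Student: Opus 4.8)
The plan is to prove Proposition \ref{prop:stability-generic} by combining density upper semicontinuity along weakly converging Brakke flows with the entropy gap of Theorem \ref{theo:entropy-gap}. First I would recall that $X \in \sing_{\textnormal{gen}}\cM$ means every tangent flow of $\cM$ at $X$ is (for $t<0$) the shrinking flow $\cM_\Sigma$ of some $\Sigma \in \cS^{\textnormal{gen}}$, so $\Theta_\cM(X) = \lambda(\Sigma) \in \{\lambda(\SS^2), \lambda(\SS^1\times\RR)\}$ (these are the $F(\Sigma)$-values of the compact, resp.\ shrinking-cylinder, models, which agree with their entropies as noted in Theorem \ref{theo:entropy-gap}). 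Set $\delta_0 > 0$ as in Theorem \ref{theo:entropy-gap}; the key point is that every $\Sigma' \in \cS^*$ that is \emph{not} in $\cS^{\textnormal{gen}}$ has entropy at least $\lambda(\SS^1\times\RR) + \delta_0 > \Theta_\cM(X) + \tfrac12\delta_0$ (say), while planes have density $1 < \lambda(\SS^2)$; thus there is a spectral gap around the set of admissible densities $\{\lambda(\SS^2),\lambda(\SS^1\times\RR)\}$ separating them from all other possible singular densities of cyclic unit-regular flows.

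Next I would use upper semicontinuity of Gaussian density under weak convergence of Brakke flows: since $X_i \to X$ and $\cM_i \rightharpoonup \cM$, Huisken monotonicity gives $\limsup_i \Theta_{\cM_i}(X_i) \leq \Theta_\cM(X)$; combined with $X_i \in \sing\cM_i$ and unit-regularity (so $\Theta_{\cM_i}(X_i) > 1$, and in fact $\geq \lambda(\SS^2)$ by the entropy gap applied to any tangent flow of $\cM_i$ at $X_i$, which is a nonflat $F$-stationary integral varifold, cyclic since the class is closed under weak limits), we conclude that for large $i$ the density $\Theta_{\cM_i}(X_i)$ lies in the narrow window $[\lambda(\SS^2), \Theta_\cM(X) + \tfrac12\delta_0]$. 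By the spectral gap from the previous paragraph, the only entropy values available to a cyclic unit-regular flow in this window are $\lambda(\SS^2)$ and $\lambda(\SS^1\times\RR)$, and since any tangent flow at $X_i$ has entropy equal to its density $\Theta_{\cM_i}(X_i)$ (the tangent flow is self-shrinking, so its entropy is attained at its own base point), that tangent flow must be a multiplicity-one $\SS^2$ or $\SS^1\times\RR$ shrinker, i.e.\ an element of $\cS^{\textnormal{gen}}$. Hence $X_i \in \sing_{\textnormal{gen}}\cM_i$ for all large $i$.

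The main obstacle is making precise the claim that a singular point of a cyclic unit-regular flow cannot have density strictly between $1$ and $\lambda(\SS^2)$, nor strictly between $\lambda(\SS^2)$ and $\lambda(\SS^1\times\RR)$, nor in $(\lambda(\SS^1\times\RR), \lambda(\SS^1\times\RR)+\delta_0)$: this requires knowing that any tangent flow at a genuine singular point is a \emph{nonflat} $F$-stationary integral varifold (so the density exceeds $1$ by White's regularity theorem / unit-regularity), that it is cyclic (inherited from $\cM_i$ under weak limits, using \cite{White:cyclic}), and that its entropy equals its density (because it is homothetically shrinking about its base point), so that Theorem \ref{theo:entropy-gap} applies directly to it — one must be slightly careful that the tangent flow need not a priori be multiplicity one, but the density-entropy identity together with the gap forces multiplicity one in the relevant window. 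A secondary technical point is confirming upper semicontinuity of $\Theta$ under weak Brakke convergence together with converging basepoints, which is standard from Huisken's monotonicity formula but should be cited (e.g.\ \cite{Ilmanen:singularities}, or \cite[Appendix]{CCMS:generic1}).
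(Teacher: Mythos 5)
Your overall route is exactly the one the paper has in mind: this proposition is only cited here (from \cite[Proposition A.1]{CCMS:generic-low-ent}), with the remark that it ``follows from elementary density upper semicontinuity considerations and the Bernstein--Wang entropy gap,'' and your sketch implements precisely that: $\Theta_\cM(X)\in\{\lambda(\SS^2),\lambda(\SS^1\times\RR)\}$ at a generic singular point, upper semicontinuity of Gaussian density along $\cM_i\rightharpoonup\cM$, $X_i\to X$, and then the gap of Theorem \ref{theo:entropy-gap} applied to tangent flows at $X_i$, using that a self-shrinking tangent flow has entropy equal to its Gaussian density.

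There is, however, one step you assert rather than prove, and it is the only genuinely nontrivial one: Theorem \ref{theo:entropy-gap} is a statement about \emph{smooth embedded} shrinkers $\Sigma\in\cS^*$, whereas a tangent flow to $\cM_i$ at an arbitrary singular point $X_i$ is a priori only the shrinking flow of an $F$-stationary \emph{integral varifold} (Definition \ref{defi:tangent-flow}), possibly with singular support. Your parenthetical fix --- that the density--entropy identity plus the gap ``forces multiplicity one in the relevant window'' --- only rules out higher multiplicity over a smooth shrinker; it does not rule out a singular $F$-stationary varifold of Gaussian density below $\lambda(\SS^1\times\RR)+\delta_0$, to which Theorem \ref{theo:entropy-gap} simply does not apply as stated. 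Closing this requires a regularity argument (or a citation that contains it): e.g.\ note that the varifold is cyclic and has all densities bounded by its entropy $<\lambda(\SS^1\times\RR)+\delta_0<2$, so its tangent cones are cyclic stationary integral cones in $\RR^3$ of density below roughly $3/2+\delta_0$; cyclicity excludes triple-junction--type cones (density $3/2$), and the remaining admissible cones are multiplicity-one planes, whence Allard's theorem gives smoothness and embeddedness with multiplicity one, after which the Bernstein--Wang gap and unit-regularity (to exclude the plane, since $X_i$ is singular) finish the argument as you describe. Equivalently, you could cite the classification of low-entropy $F$-stationary varifolds in $\RR^3$ rather than just the smooth gap theorem. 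With that step supplied (or properly cited), your argument is the intended proof; note also that the lower bound $\Theta_{\cM_i}(X_i)\ge\lambda(\SS^2)$ you invoke early on is not actually needed and suffers from the same smooth-versus-varifold issue, so it is cleaner to drop it and argue only inside the window $(1,\lambda(\SS^1\times\RR)+\delta_0)$.
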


See also \cite[Propsition 2.3]{SchulzeSesum}.

\subsection{Frankel property for $F$-stationary varifolds} First recall the following Frankel theorem for $F$-stationary varifolds from \cite[Corollary D.4]{CCMS:generic1}:

\begin{theorem} \label{theo:frankel-shrinker}
	If $V, V'$ are $F$-stationary varifolds, then $\supp V \cap \supp V' \neq \emptyset$.
\end{theorem}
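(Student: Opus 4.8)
The plan is to reduce the statement to the classical Frankel property for minimal hypersurfaces, using the fact (recorded in Section \ref{subsec:shrinkers} above) that $F$-stationary varifolds are precisely stationary varifolds for the conformally flat metric $g_F := e^{-|\cdot|^2/4}\, g_{\RR^3}$ on $\RR^3$. The key geometric input is that $(\RR^3, g_F)$ has a useful convexity/completeness structure: although $g_F$ is incomplete, the function $f(\bx) = \tfrac14|\bx|^2$ plays the role of a strictly convex exhaustion, and one can run a sweepout/geodesic-pulling argument against it. Concretely, I would argue by contradiction: suppose $\supp V \cap \supp V' = \emptyset$. Since both supports are closed and (by the monotonicity/density lower bound for $F$-stationary varifolds) nonempty, we may pick $\bx \in \supp V$, $\bx' \in \supp V'$ realizing (or nearly realizing) the $g_F$-distance between the two supports, and consider a minimizing $g_F$-geodesic $\gamma$ joining them. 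The first obstacle to address is that minimizing geodesics might escape to infinity, where $g_F$ degenerates; this is handled by the observation that $g_F$-geodesics are pulled toward the origin — equivalently, by a barrier argument using that the round spheres $\SS^2(r)$ have $g_F$-mean curvature vector pointing inward for $r$ large (indeed spheres of radius $2$ are the minimal ones, and larger spheres are "mean convex" toward the origin in the $g_F$ sense). So the relevant geodesics stay in a fixed compact set, and minimizers exist.

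Given a minimizing $g_F$-geodesic segment $\gamma$ from $\supp V$ to $\supp V'$ of positive length, meeting each support orthogonally at its endpoints by the first variation of arc length (using that $V$, $V'$ are stationary, hence "minimal", so the distance function is differentiable in the appropriate weak sense at interior-to-support points — here one uses the standard fact that a stationary varifold cannot lie locally on one side of a smooth hypersurface through a support point unless it contains a piece of it, i.e.\ the strong maximum principle of White/Wickramasekera for stationary varifolds), one derives a contradiction from the second variation: the metric $g_F$ has the property that the relevant Ricci-type term along $\gamma$ is positive, or more robustly, one slides $V$ along the gradient flow of $g_F$-distance to $\supp V'$ and uses the stationarity of $V$ against a test vector field supported near $\gamma$ to show the $F$-weighted area must strictly decrease, contradicting stationarity (or contradicting minimality of the length of $\gamma$). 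The cleanest route is probably the synthetic one: show that $(\RR^3 \setminus \{\text{large ball}\}, g_F)$, or rather the relevant compact core, behaves like a manifold with $\mathrm{Ric} > 0$ "in the directions that matter," so that the classical Frankel argument (two minimal hypersurfaces in a positive-Ricci manifold must intersect) applies verbatim.

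The main obstacle I anticipate is the low regularity: $V$ and $V'$ are only varifolds, not smooth hypersurfaces, so "orthogonal intersection with $\gamma$" and "second variation along $\gamma$" need to be phrased in terms of the first-variation identity $\int \operatorname{div}_S X \, dV = 0$ tested against carefully chosen vector fields $X$ built from the $g_F$-distance function, rather than via smooth normal exponential maps. This is exactly the kind of argument where one uses that the distance function to $\supp V'$ in the metric $g_F$ is a viscosity supersolution of the relevant (weighted) minimal-surface operator, together with the strong maximum principle for stationary integral varifolds to promote "touching" to "containment." I would expect the authors to cite \cite{White:topology-weak} or a maximum-principle result (e.g.\ of Ilmanen or White or Wickramasekera) to legitimize this step, and to note that $F$-stationarity already forces a positive lower density bound everywhere on the support, so the supports are genuinely closed sets on which these comparison arguments bite. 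Once that technical scaffolding is in place, the contradiction with minimality of $\gamma$ (or with $\lambda(V), \lambda(V') < \infty$) closes the proof.
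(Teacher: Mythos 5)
This paper does not actually prove Theorem \ref{theo:frankel-shrinker}: it is imported verbatim from \cite[Corollary D.4]{CCMS:generic1}, where it is obtained from a parabolic, avoidance/maximum-principle style argument applied to the shrinking Brakke flows associated to $V,V'$, precisely so as to sidestep the incompleteness of the Gaussian conformal metric and the possible singularity of the supports. Your proposal is instead an elliptic, Frankel-type second-variation argument in $(\RR^3, e^{-|\cdot|^2/4}g_{\RR^3})$, and as written it has genuine gaps.

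First, the curvature input is wrong as stated. For $g_F=e^{2\varphi}g_{\RR^3}$ with $\varphi=-|\bx|^2/8$, the conformal transformation formula gives $\Ric_{g_F}(e,e)=1-\tfrac{1}{16}|\bx|^2$ (in the Euclidean frame) for directions $e$ orthogonal to $\bx$, so $\Ric_{g_F}$ has strictly negative directions as soon as $|\bx|>4$; there is no region exhaustion on which the classical Frankel argument ``applies verbatim,'' and ``positive in the directions that matter'' is not a substitute for an actual second-variation inequality. The positivity that genuinely powers a Frankel argument in this setting is the Bakry--\'Emery condition $\Ric+\Hess(|\bx|^2/4)=\tfrac12 g_{\RR^3}>0$ for the weighted space $(\RR^3,g_{\RR^3},e^{-|\bx|^2/4})$, and the argument must then be run in the weighted ($f$-minimal) framework, with Euclidean geodesics and weighted length, where the boundary terms are the weighted mean curvatures. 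That is a fixable but real error, and only in the smooth case.

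Second, the variational setup itself is not justified. Both supports may be noncompact, and $g_F$ is incomplete with infinity at finite distance, so the $g_F$-distance between two disjoint supports need not be attained and could a priori even be zero (ends approaching one another at infinity); your sphere-barrier remark controls geodesics from escaping but does not address this degeneration of the infimum, and the statement as recorded carries no entropy hypothesis to exploit. Moreover, even granting a minimizing segment of positive length, its feet may be singular points of $\supp V$, $\supp V'$ (possibly with multiplicity), and the orthogonality and second-variation steps for stationary varifolds are exactly the hard technical content: the touching/containment maximum principles you invoke (White, Solomon--White, Wickramasekera) apply once the supports meet, which is what you are trying to prove, and ``test the first variation against vector fields built from the distance function'' is a plan rather than an argument. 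This is the reason the cited proof is parabolic rather than elliptic; your route would require substantial new work to close these gaps.
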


The following is essentially contained in the proof of \cite[Theorem 7.17 (5)]{CCMS:generic1}.

\begin{proposition} \label{prop:frankel-shrinker}
	Fix $\Sigma \in \cS$ and let $\Omega \subset \RR^3$ be either component of $\RR^3 \setminus \Sigma$. If $V$ is an $F$-stationary integral varifold in $\RR^3$ with
	\begin{equation} \label{eq:frankel-shrinker}
		\supp V \subset \bar \Omega,
	\end{equation}
	then $V = k\Sigma$ for some $k \in \NN$.\footnote{Here and throughout, we implicitly identify $\Sigma$ with its induced integral varifold.}
\end{proposition}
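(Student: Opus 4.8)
The plan is to deduce this from the Frankel-type theorem for $F$-stationary varifolds (Theorem \ref{theo:frankel-shrinker}) together with a strong-maximum-principle / constancy argument. First I would set up the constraint geometry: $\Sigma$ is an embedded shrinker with $\lambda(\Sigma) < \infty$, so it is a smooth properly embedded minimal hypersurface for the Gaussian metric $g_F := e^{-|\cdot|^2/4} g_{\RR^3}$, and $\Omega$ is one of the two components of its complement. Since $\bar\Omega$ is a closed set with smooth boundary $\Sigma$, I want to show that an $F$-stationary integral varifold $V$ supported in $\bar\Omega$ must in fact be supported on $\Sigma = \partial\Omega$, and then that it equals an integer multiple of $\Sigma$ as a varifold.

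The key step is to rule out that $\supp V$ meets the open set $\Omega$. Suppose it did; pick a point $p \in \supp V \cap \Omega$. The idea is to run the standard first-variation/avoidance argument: $\Sigma$, viewed as a one-sided barrier, is $g_F$-minimal, and $V$ is $g_F$-stationary with support touching $\bar\Omega$ from inside, so one can slide a foliation of $\Sigma$-like barriers (or use the distance function to $\Sigma$ within $\bar\Omega$) and invoke the strong maximum principle for stationary varifolds à la White/Ilmanen/Solomon–White to conclude $\supp V$ cannot have an interior touching point with $\Sigma$ unless $\supp V$ contains a piece of $\Sigma$. Alternatively—and this is how I'd actually organize it to use exactly what's quoted—I would apply Theorem \ref{theo:frankel-shrinker} to $V$ and to (the varifold induced by) $\Sigma$: they are both $F$-stationary, hence $\supp V \cap \Sigma \neq \emptyset$; combined with $\supp V \subset \bar\Omega$, this gives a touching point of $\supp V$ with the boundary $\Sigma$ from the $\Omega$ side. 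At such a touching point the strong maximum principle for $F$-stationary varifolds (Wickramasekera / Ilmanen–White; here in the codimension-one smooth-barrier case) forces $\supp V$ to coincide with $\Sigma$ in a neighborhood, and then a continuity/clopen argument along the connected hypersurface $\Sigma$ propagates this to all of $\Sigma$, yielding $\supp V = \Sigma$.

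Once $\supp V = \Sigma$ with $\Sigma$ smooth and connected, it remains to see that $V$ has constant integer multiplicity. Since $\Sigma$ is smooth, $V = \theta \,\mathcal{H}^2\!\restriction_\Sigma$ for an integer-valued density function $\theta$, and the stationarity (first variation zero) of $V$ together with the minimality of $\Sigma$ forces $\theta$ to be (weakly, hence classically) locally constant on $\Sigma$; connectedness of $\Sigma$ then gives $\theta \equiv k$ for a single $k \in \NN$, i.e. $V = k\Sigma$. I would phrase the constancy as: for any smooth compactly supported tangential vector field $Y$ on $\Sigma$, $0 = \delta V(Y) = \int_\Sigma \theta \,\operatorname{div}_\Sigma Y$, so $\theta$ has vanishing distributional tangential derivative; alternatively cite the standard fact that an integral stationary varifold whose support is a smooth connected minimal hypersurface has constant multiplicity.

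The main obstacle is the strong-maximum-principle input: one must be careful that $\Sigma$ need not be compact (only properly embedded with finite entropy), so the touching point supplied by the Frankel theorem, and the ensuing propagation of the coincidence set, must be handled with a genuinely local strong maximum principle for stationary varifolds against a smooth minimal barrier, plus a connectedness argument on $\Sigma$—rather than any compactness shortcut. Everything else (translating the shrinker/$F$-stationary conditions into Gaussian-metric minimality, and the constant-multiplicity conclusion) is routine given the results already cited in the excerpt.
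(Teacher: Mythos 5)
Your overall strategy (Frankel theorem plus a Solomon--White-type strong maximum principle plus the constancy theorem) is the same as the paper's, and the final constancy step is fine. But there is a genuine gap in the middle: your argument, as organized, only proves $\Sigma \subset \supp V$. Applying Theorem \ref{theo:frankel-shrinker} once to $V$ and $\Sigma$ produces a single touching point, and the strong maximum principle plus your clopen argument then propagates $\Sigma$ into $\supp V$; none of this rules out additional components of $\supp V$ lying in the open region $\Omega$, away from $\Sigma$. Yet it is precisely the reverse inclusion $\supp V \subset \Sigma$ that the constancy theorem needs. Your proposed mechanism for excluding such interior pieces --- ``slide a foliation of $\Sigma$-like barriers'' or use the distance function to $\Sigma$ --- does not work in general: the dilates $\lambda\Sigma$ of a shrinker have Gaussian mean curvature proportional to $(\lambda - \lambda^{-1})\, \bx\cdot\nu$, which has no sign unless $\Sigma$ is star-shaped, so they are not one-sided barriers, and $\Omega$ carries no natural foliation by hypersurfaces with signed mean curvature for the Gaussian functional; likewise the Gaussian distance to $\Sigma$ has no useful barrier property. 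The conclusion you draw from that paragraph (``no interior touching point with $\Sigma$ unless $\supp V$ contains a piece of $\Sigma$'') is just the boundary maximum principle again and says nothing about points of $\supp V$ strictly inside $\Omega$.

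The missing idea, and the way the paper closes this, is to argue component by component: by the Solomon--White maximum principle, every connected component of $\supp V$ is either disjoint from $\Sigma$ or coincides with it, and the Frankel theorem is then applied not once to $V$ but to the $F$-stationary varifold carried by \emph{each} component, which forces every component to meet $\Sigma$ and hence excludes any component contained in $\Omega$. Only after this does one know $\supp V = \Sigma$, at which point the constancy theorem (in the conformal metric, using the smoothness and connectedness of $\Sigma$) gives $V = k\Sigma$ exactly as you describe. So the fix is short, but as written your proof does not establish the inclusion $\supp V \subset \Sigma$ that the final step requires.
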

\begin{proof}
	It follows from \eqref{eq:frankel-shrinker} and the Solomon--White maximum principle for stationary varifolds \cite{SolomonWhite} that all components of $\supp V$ are either disjoint from $\partial \Omega = \Sigma$ or they coincide with it. Theorem \ref{theo:frankel-shrinker} rules out the former case, so $\supp V = \Sigma$. The result that $V$ corresponds to an integral multiple of $\Sigma$ now follows from the constancy theorem for stationary varifolds \cite{Simon:GMT} (applied in the conformal metric) and $\Sigma$'s smoothness.
\end{proof}

\subsection{Geometric characterization of generic singularities}

The following characterization of generic shrinkers was central to \cite{CCMS:generic-low-ent} (see also Colding--Minicozzi's classification of $\cS^{\textrm{gen}}$ as the unique linearly stable self-shrinkers \cite{ColdingMinicozzi:generic}).

\begin{proposition}[{\cite[Proposition 2.2]{CCMS:generic-low-ent}}] \label{prop:geometric-characterization}
	Fix $\Sigma \in \cS^*$ and let $\Omega \subset \RR^3$ be either component of $\RR^3 \setminus \Sigma$. Assume that there exists $X_0 \in (\RR^3 \times \RR) \setminus (\bOh, 0)$ such that
	\begin{equation} \label{eq:geometric-characterization}
		(X_0 + \supp \cM_\Sigma) \cap \mathfrak{t}^{-1}((-\infty, \min\{0, \mathfrak{t}(X_0)\})) \subset \cup_{t < 0} \sqrt{-t} \bar \Omega.
	\end{equation}
	Then, $\Sigma \in \cS^{\textnormal{gen}}$. 
\end{proposition}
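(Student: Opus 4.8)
The plan is to show the contrapositive: if $\Sigma \in \cS^* \setminus \cS^{\textnormal{gen}}$, then no spacetime point $X_0 \neq (\bOh,0)$ can satisfy the containment \eqref{eq:geometric-characterization}. The containment hypothesis says that a time-translate-and-space-translate of the shrinking flow $\cM_\Sigma$ lies, at all sufficiently negative times relative to $X_0$, inside the ``cone'' swept out by the dilated closed half-space $\sqrt{-t}\,\bar\Omega$. I would first reduce to a cleaner static picture by rescaling. Write $X_0 = (\bx_0, t_0)$. The natural move is to take a tangent flow at the spacetime origin of the translated flow $X_0 + \cM_\Sigma$, or rather to exploit self-similarity directly: parabolically rescaling $X_0 + \cM_\Sigma$ about $(\bOh,0)$ pushes $X_0$ toward the origin, and in the limit one obtains a (weak, $F$-stationary) limit object supported in $\bar\Omega$. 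The point is that the right-hand side $\cup_{t<0}\sqrt{-t}\,\bar\Omega$ is exactly scale-invariant under $\operatorname{ParDil}_\lambda$, so the containment is preserved in the limit, while the left-hand side limits to a shrinking flow generated by an $F$-stationary varifold $V$ with $\supp V \subset \bar\Omega$.

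The core of the argument is then Proposition \ref{prop:frankel-shrinker}: an $F$-stationary integral varifold supported in the closed half-space $\bar\Omega$ bounded by $\Sigma$ must be $k\Sigma$ for some $k \in \NN$. So the limiting varifold $V$ is $k\Sigma$. The remaining task is to identify what $V$ actually is, coming from the limit of $X_0 + \cM_\Sigma$, and to extract a contradiction --- i.e., to show the only way the limit can be (a multiple of) $\Sigma$ is if $X_0 = (\bOh,0)$, which was excluded, \emph{unless} $\Sigma$ already has extra symmetry forcing $\Sigma \in \cS^{\textnormal{gen}}$. Concretely, the limit of the translated shrinking flow under $\operatorname{ParDil}_{\lambda_i}$ with $\lambda_i \to 0$ depends on the sign of $t_0$: if $t_0 < 0$ the translation in time dominates and one should get the static flow by $\Sigma$ itself translated by the rescaled $\bx_0/\sqrt{-t_0}\,\cdot$ in a way that, combined with $\supp V = \Sigma$, forces $\Sigma$ to be invariant under a nontrivial translation --- hence $\Sigma$ splits off an $\RR$ factor, so (being an embedded shrinker with finite entropy in $\RR^3$, not a plane) $\Sigma$ is a cylinder, i.e.\ $\Sigma \in \cS^{\textnormal{gen}}$, contradiction; if $t_0 \geq 0$ then $\mathfrak t(X_0) \geq 0$ and the constraint is on $\mathfrak t^{-1}((-\infty,0))$, and the spatial part $\bx_0 \neq \bOh$ must be handled by a dilation/translation invariance argument producing again that $\Sigma$ admits a dilation or translation symmetry, forcing it into $\cS^{\textnormal{gen}}$ (a shrinker invariant under a dilation, other than a plane, does not exist among closed-entropy shrinkers; invariance under translation gives a cylinder). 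This last dichotomy, and making the ``depends on sign of $t_0$'' blow-up analysis rigorous, is where I expect the real work to be.

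The main obstacle, I think, is the careful bookkeeping of the blow-up: one must check that $\operatorname{ParDil}_{\lambda_i}(X_0 + \cM_\Sigma)$ actually converges (as Brakke flows, using Huisken monotonicity and the entropy bound $\lambda(\Sigma) < \infty$ to get compactness), identify the limit precisely in terms of $X_0$ and $\Sigma$ --- including keeping track of multiplicity, since a priori the limit could be $k\Sigma$ with $k > 1$ --- and verify that the closed-set containment \eqref{eq:geometric-characterization} really does pass to the limit (it does, since $\cup_{t<0}\sqrt{-t}\,\bar\Omega$ is a fixed closed, dilation-invariant set and support behaves well under weak convergence). Once the limit is pinned down, invoking Proposition \ref{prop:frankel-shrinker} is immediate, and the symmetry-extraction step is short: a nonplanar finite-entropy embedded shrinker in $\RR^3$ invariant under a one-parameter group of translations or dilations must be a round cylinder (or sphere), i.e.\ in $\cS^{\textnormal{gen}}$. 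I would also double-check the degenerate sub-case where $X_0$ is purely spatial versus purely temporal versus mixed, since the rescaling limit differs, but in every case the conclusion is the same forced symmetry.
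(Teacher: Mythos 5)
First, note that this paper does not prove Proposition \ref{prop:geometric-characterization} at all: it is quoted from \cite[Proposition 2.2]{CCMS:generic-low-ent}, so there is no internal proof to match; your proposal has to stand on its own, and it has a genuine gap at its core. The central mechanism you propose --- rescale $X_0 + \cM_\Sigma$, identify the limiting $F$-stationary varifold via Proposition \ref{prop:frankel-shrinker}, and conclude that the limit can only be (a multiple of) $\Sigma$ if $X_0 = (\bOh,0)$ ``unless $\Sigma$ has a forced symmetry'' --- does not work, because every trace of $X_0$ is destroyed by the rescaling. If you blow down (your ``pushes $X_0$ toward the origin''), the translation scales away and the limit is exactly $\cM_\Sigma$, with multiplicity one, trivially supported in $\cup_{t<0}\sqrt{-t}\,\bar\Omega$; Proposition \ref{prop:frankel-shrinker} is then satisfied with $V=\Sigma$ and yields no contradiction and no symmetry of $\Sigma$. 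If instead you zoom in at $(\bOh,0)$, you see either the empty flow (when $\mathfrak{t}(X_0)<0$, since the translated flow is extinct after time $\mathfrak{t}(X_0)$) or a static plane or nothing (when $\mathfrak{t}(X_0)>0$, since near time $0$ the translated flow is a fixed smooth surface); in no regime do you obtain ``$\Sigma$ translated by $\bx_0/\sqrt{-t_0}$,'' so the claimed dichotomy forcing translation/dilation invariance of $\Sigma$ is not established. In short, the statement is a rigidity theorem, and a pure compactness-plus-Frankel argument at degenerate scales cannot see the hypothesis $X_0 \neq (\bOh,0)$.

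The actual content lives at unit scale. Writing $X_0=(\bx_0,t_0)$ and setting $a=(-t)^{-1/2}$, hypothesis \eqref{eq:geometric-characterization} says precisely that the one-parameter family $\Sigma_a := a\,\bx_0 + \sqrt{1+a^2 t_0}\,\Sigma$ lies in $\bar\Omega$ for all admissible $a>0$, with $\Sigma_a \to \Sigma$ as $a\to 0^+$. The proof (as in \cite[Proposition 2.2]{CCMS:generic-low-ent}) extracts from this one-sidedness a sign on the normal component of the generating deformation: $\bx_0\cdot\nu \geq 0$ if $\bx_0\neq\bOh$, and a sign on $\bx\cdot\nu$, i.e.\ on $H$, if $\bx_0=\bOh$, $t_0\neq 0$. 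One then needs genuinely analytic input: the strong maximum principle applied to the eigenfunction equations $L(\bx_0\cdot\nu)=\tfrac12\,\bx_0\cdot\nu$ and $LH=H$ (either the function vanishes identically, giving a splitting and hence a round cylinder, or it is strictly positive), a spectral/positive-eigenfunction comparison showing that strict positivity of $\bx_0\cdot\nu$ forces $H$ not to change sign, and finally the Huisken/Colding--Minicozzi classification of mean convex shrinkers to conclude $\Sigma\in\cS^{\textnormal{gen}}$. None of these steps --- the passage from one-sidedness to a signed Jacobi-type field, the maximum principle/spectral argument, or the mean convex classification --- appears in your sketch, and they cannot be replaced by the blow-up bookkeeping you flag as the main obstacle. (Your auxiliary claims also need care: a blow-down limit equal to $k\Sigma$ does not imply $X_0=(\bOh,0)$, and the symmetry-extraction step is not ``short'' --- it is where the theorem actually lives.)
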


In short, Proposition \ref{prop:geometric-characterization} characterizes $\Sigma \in \cS^{\textrm{gen}}$ as the only shrinkers having some nontrivial translate of their spacetime track (i.e., $X_0 + \supp \cM_\Sigma$ for some $X_0 \neq (\bOh, 0)$)  contained, for sufficienty negative times, on one side of the original shrinking spacetime track (i.e., $\cup_{t < 0} \sqrt{-t} \bar \Omega$, which is a side of $\cM_\Sigma$).

\subsection{First nongeneric singular time}

We recall the following notion from \cite{CCMS:generic1}:\footnote{In the reference, $T_{\textnormal{bad}}(\cM)$ is equivalently defined as the supremum of times $T$ such that all tangent flows at $X = (\bx, t) \in \sing \cM$, $t < T$, are for negative times coincident with some $\cM_\Sigma$, $\Sigma \in \cS^{\textnormal{gen}}$.}

\begin{definition}[First nongeneric time, {cf.\ \cite[Section 11]{CCMS:generic1}}] \label{defi:first-nongeneric}
	Suppose that $\cM$ is a cyclic unit-regular integral Brakke flow in $\RR^3$ with $\cM(0) = \cH^2 \lfloor M$ for a closed embedded surface $M \subset \RR^3$. We define
	\[ T_{\textnormal{bad}}(\cM) = \inf \{ \mathfrak{t}(X) : X \in \sing \cM \setminus \sing_{\textnormal{gen}} \cM \}, \]
	with the convention $\inf \emptyset = +\infty$. 
\end{definition}

A direct corollary of Theorem \ref{theo:entropy-gap} and the upper semicontinuity of density is:

\begin{corollary} \label{coro:first-nongeneric}
	Let $\cM$ be a cyclic unit-regular integral Brakke flow with $\cM(0) = \cH^2\lfloor M$ for a closed embedded surface $M \subset \RR^3$. If $T_{\textnormal{bad}}(\cM) < \infty$, then there must exist
	\[ X \in \sing \cM \setminus \sing_{\textnormal{gen}} \cM, \; \mathfrak{t}(X) = T_{\textnormal{bad}}(\cM). \]
	Any such point $X$ is called a first nongeneric point for $\cM$.
\end{corollary}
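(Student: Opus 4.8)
The plan is to prove Corollary~\ref{coro:first-nongeneric} by a straightforward compactness/upper-semicontinuity argument, exploiting the entropy gap of Theorem~\ref{theo:entropy-gap} to ensure that the infimum defining $T_{\textnormal{bad}}(\cM)$ is actually attained. First I would set $T := T_{\textnormal{bad}}(\cM)$ and assume $T < \infty$. By definition of the infimum, there exists a sequence $X_i \in \sing\cM \setminus \sing_{\textnormal{gen}}\cM$ with $\mathfrak{t}(X_i) \to T$. Since $\cM(0) = \cH^2 \lfloor M$ for a closed embedded surface $M$, the flow has bounded area ratios and the support $\supp\cM$ is compact in any finite time slab; hence, after passing to a subsequence, $X_i \to X$ for some spacetime point $X$ with $\mathfrak{t}(X) = T$. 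Because $\supp\cM$ is closed and $X_i \in \supp\cM$, we get $X \in \supp\cM$. Moreover $X \notin \supp M(0) \times \{0\}$ since $T > 0$ (the flow is smooth for a short time, so no singular points occur at or near $t=0$; alternatively $\mathfrak t(X_i)>0$ and $\mathfrak t(X_i)\to T$ forces $T \geq 0$, and $T=0$ is excluded by smoothness of the flow near $M$).

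Next I would argue that $X \in \sing\cM$, i.e.\ that $X$ is not a regular point. If $X \in \reg\cM$, then there is a forward-backward parabolic ball around $X$ inside which $\cM$ is a smooth connected multiplicity-one flow; but then for large $i$ the points $X_i$ lie in that ball, contradicting $X_i \in \sing\cM$. Hence $X \in \sing\cM$. It remains to show $X \notin \sing_{\textnormal{gen}}\cM$. Here I would use upper semicontinuity of the Gaussian density $\Theta_\cM$ together with the entropy gap. Since each $X_i \in \sing\cM \setminus \sing_{\textnormal{gen}}\cM$, some (equivalently every) tangent flow to $\cM$ at $X_i$ is, for $t<0$, a multiplicity-$k$ shrinking flow $\cM_V$ with $V$ an $F$-stationary integral varifold that is not a multiplicity-one element of $\cS^{\textnormal{gen}}$. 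By the entropy gap of Theorem~\ref{theo:entropy-gap} (noting that the entropy of such a tangent flow equals $\Theta_\cM(X_i)$), one has $\Theta_\cM(X_i) \geq \lambda(\SS^1\times\RR)$ strictly — more precisely $\Theta_\cM(X_i) > \lambda(\SS^2) = \tfrac4e$ in the spherical-entropy-or-above regime, or $\Theta_\cM(X_i) \geq \lambda(\SS^1\times\RR)$ with strict inequality unless it is exactly a multiplicity-one cylinder. The cleanest uniform statement: there is $\eta_0 > 0$ so that $\Theta_\cM(X_i) \geq \Theta_{\min} := \min\{\tfrac4e, \lambda(\SS^1\times\RR)\} = \tfrac4e$ fails to be tight, i.e.\ $\Theta_\cM(X_i) > \tfrac4e$ strictly for all $i$ — but to get a \emph{closed} condition I should instead record that every nongeneric singular point has density strictly exceeding that of the sphere, hence (again by the gap, since there are no densities strictly between $\tfrac4e$ and $\sqrt{2\pi/e}$) density at least $\lambda(\SS^1\times\RR)$, with equality only for multiplicity-one cylinders which are generic; so in fact $\Theta_\cM(X_i) \geq \lambda(\SS^1\times\RR)$ and if equality holds the point is in $\sing_{\textnormal{gen}}$, contradiction, so $\Theta_\cM(X_i) \geq \sqrt{2\pi/e} + \delta_0$. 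Wait — more carefully: a nongeneric singular point has a tangent flow which is either a multiplicity-$\geq 2$ plane/sphere/cylinder, or a multiplicity-one shrinker in $\cS^* \setminus \cS^{\textnormal{gen}}$, or a higher-multiplicity such shrinker; in all cases $\Theta_\cM(X_i) \geq \min\{2\lambda(\RR^2), \tfrac{8}{e}, \text{?}, \lambda(\SS^1\times\RR)+\delta_0\}$. Since $2 < \tfrac{8}{e} \approx 2.94$ and $\lambda(\SS^1\times\RR) + \delta_0 > \sqrt{2\pi/e} \approx 1.52 > 2$? No: $1.52 < 2$. So the genuinely sharp bound is $\Theta_\cM(X_i) \geq \lambda(\SS^1 \times \RR) + \delta_0 \approx 1.52 + \delta_0$, valid because a multiplicity-one plane is regular (density $1$, excluded), a multiplicity-one sphere/cylinder is generic (excluded), and every remaining case has density at least $\lambda(\SS^1\times\RR) + \delta_0$ by Theorem~\ref{theo:entropy-gap} (higher-multiplicity spheres/cylinders have density $\geq \tfrac{8}{e} > \lambda(\SS^1\times\RR)+\delta_0$ once $\delta_0$ is chosen small). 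Thus $\Theta_\cM(X_i) \geq \lambda(\SS^1\times\RR) + \delta_0$ for all $i$, and by upper semicontinuity $\Theta_\cM(X) \geq \limsup_i \Theta_\cM(X_i) \geq \lambda(\SS^1\times\RR) + \delta_0$, so no tangent flow to $\cM$ at $X$ can be a multiplicity-one sphere or cylinder; hence $X \notin \sing_{\textnormal{gen}}\cM$. This gives $X \in \sing\cM \setminus \sing_{\textnormal{gen}}\cM$ with $\mathfrak{t}(X) = T = T_{\textnormal{bad}}(\cM)$, as desired, and by construction any such point is a first nongeneric point.

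I expect the main (minor) obstacle to be bookkeeping the entropy-gap inequality so that it defines a \emph{closed} condition on $X$: the set of singular points with a generic tangent flow is not obviously closed on its own, but the complement within $\sing\cM$ — points of density $\geq \lambda(\SS^1\times\RR) + \delta_0$ — is closed by upper semicontinuity of density, and this is exactly what makes the infimum attained. The one subtlety requiring care is confirming that the Gaussian density of a tangent flow at $X_i$ equals $\Theta_\cM(X_i)$ and matches the entropy $\lambda$ of the limiting shrinker (true for multiplicity-one shrinkers; for higher multiplicity one gets an integer multiple, which only increases the density further), and that a first-time-of-singularity argument does indeed exclude $T = 0$ because $\cM$ agrees with a smooth flow on a short time interval $[0,\epsilon)$. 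None of these steps is deep; the corollary is genuinely a direct consequence of Theorem~\ref{theo:entropy-gap} and density upper semicontinuity, as the statement advertises.
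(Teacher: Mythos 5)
Your overall strategy --- take a minimizing sequence of nongeneric singular points, extract a limit $X$ with $\mathfrak t(X)=T_{\textnormal{bad}}(\cM)$, rule out $X\in\reg\cM$ by the openness of regularity, and rule out $X\in\sing_{\textnormal{gen}}\cM$ via upper semicontinuity of density plus the Bernstein--Wang gap --- is exactly the argument the paper has in mind (the paper states the corollary with no written proof, as a ``direct corollary'' of Theorem \ref{theo:entropy-gap} and upper semicontinuity, with Proposition \ref{prop:stability-generic} available as the packaged version of the needed stability). The compactness of the support in finite time slabs, the exclusion of $T=0$, and the openness of $\reg\cM$ are all fine.

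The genuine gap is in your case analysis of tangent flows at the points $X_i$. You assert that at a nongeneric singular point every tangent flow is a plane, sphere, cylinder, or element of $\cS^*\setminus\cS^{\textnormal{gen}}$, each with some integer multiplicity, and from this you extract the uniform lower bound $\Theta_\cM(X_i)\ge\lambda(\SS^1\times\RR)+\delta_0$. But by Definition \ref{defi:tangent-flow} a tangent flow at an arbitrary singular point is a priori only a shrinking flow associated to an $F$-stationary \emph{integral varifold} $V$, which need not be smooth with multiplicity; smoothness of tangent flows is precisely what Theorem \ref{theo:ilmanen-bamler-kleiner} provides, and only at a \emph{first} nongeneric point (so invoking it here would be circular), and Theorem \ref{theo:entropy-gap} is stated only for smooth embedded $\Sigma\in\cS^*\setminus\cS^{\textnormal{gen}}$, so it gives no bound on $F(V)$ for a possibly singular $V$. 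Thus your claimed dichotomy, and hence the uniform density lower bound at every nongeneric singular point, is unjustified as written. The standard repair is to argue by contradiction at the limit instead: if $X\in\sing_{\textnormal{gen}}\cM$ then $\Theta_\cM(X)\le\sqrt{2\pi/e}$, so by upper semicontinuity $\Theta_\cM(X_i)\le\sqrt{2\pi/e}+\eps$ for large $i$; at such low Gaussian density one first shows any tangent flow at $X_i$ is a smooth multiplicity-one shrinker (multiplicity $\ge 2$ is excluded since the density is $<2$, and non-smooth support is excluded because the cyclic property passes to tangent flows and a cyclic $F$-stationary integral varifold with density $<2$ has only multiplicity-one planar tangent cones, hence is smooth by Allard), and only then does the Bernstein--Wang gap force $X_i\in\sing_{\textnormal{gen}}\cM$, a contradiction. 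This is exactly the content of Proposition \ref{prop:stability-generic} applied with the constant sequence $\cM_i=\cM$; citing it directly closes the gap in one line, whereas your version silently assumes the regularity step it is supposed to supply.
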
 

\subsection{Ilmanen and Bamler--Kleiner tangent flow structure theorem}

The work of Ilmanen \cite{Ilmanen:singularities} and Bamler--Kleiner \cite{BamlerKleiner} gives us a precise description of tangent flows at the first nongeneric time time:

\begin{theorem} \label{theo:ilmanen-bamler-kleiner}
	Let $\cM$ be a cyclic unit-regular integral Brakke flow with $\cM(0) = \cH^2\lfloor M$, where $M \subset \RR^3$ is a closed embedded surface. If $T_{\textnormal{bad}}(\cM) < \infty$ and $X$ is a first nongeneric point for $\cM$ (see Corollary \ref{coro:first-nongeneric}), then every tangent flow to $\cM$ at $X$ coincides, for $t < 0$, with $\cM_\Sigma$ for some $\Sigma \in \cS^* \setminus \cS^{\textnormal{gen}}$.
\end{theorem}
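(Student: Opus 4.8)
The plan is to show that at a first nongeneric point $X$, every tangent flow $\cM_V$ is modeled on a smooth, embedded, non-generic shrinker. The argument has three parts: (i) $V$ has multiplicity one; (ii) $V$ is smooth, embedded, and has finite entropy, hence $\supp V = \Sigma$ for some $\Sigma \in \cS$; (iii) $\Sigma \notin \cS^{\textnormal{gen}}$ and $\Sigma$ is not a plane. Part (iii) is the easy one: if $\Sigma \in \cS^{\textnormal{gen}}$, then all tangent flows at $X$ lie in $\cS^{\textnormal{gen}}$ (by the footnote to Definition \ref{defi:reg-sing-gen}, i.e.\ \cite{ColdingIlmanenMinicozzi,ColdingMinicozzi:uniqueness-tangent-flow}), so $X \in \sing_{\textnormal{gen}} \cM$, contradicting the choice of first nongeneric point. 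Similarly $\Sigma$ cannot be a plane, since then $\Theta_\cM(X) = 1$ and unit-regularity forces $X \in \reg \cM$, contradicting $X \in \sing\cM$. So the content is in (i) and (ii).

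For the smoothness and embeddedness in (ii), I would invoke Ilmanen's structure theorem for tangent flows to mean curvature flow of surfaces in $\RR^3$ \cite{Ilmanen:singularities}: any tangent flow is a shrinking flow associated to an $F$-stationary integral varifold $V$ which is smooth and embedded away from a discrete set, and in fact (combining with White's regularity theory and cyclicity, which passes to tangent flows) one upgrades this to $V = k\Sigma$ for a smooth embedded shrinker $\Sigma$ with $k \in \NN$ and $\lambda(\Sigma) \leq \lambda(M) < \infty$; thus $\Sigma \in \cS$. That $V$ is cyclic follows because the class of cyclic unit-regular integral Brakke flows is closed under the weak convergence defining tangent flows. (Ilmanen's theorem also gives that $\Sigma$ is connected after restricting to a component, but what we need is just $\Sigma \in \cS$, possibly after noting the relevant component of the tangent flow.)

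The main obstacle is part (i), ruling out higher multiplicity $k \geq 2$; this is precisely where the Bamler--Kleiner multiplicity-one theorem \cite{BamlerKleiner} enters, and it is the only place in the whole paper we use it. The point is delicate because Bamler--Kleiner's theorem applies to flows that are \emph{almost regular} in their sense, and a priori our $\cM$ is only known to be almost regular up to time $T_{\textnormal{bad}}(\cM)$ — which is exactly why we must invoke multiplicity one only at the \emph{first} nongeneric time. Concretely: by definition of $T_{\textnormal{bad}}$, every singular point of $\cM$ strictly before time $T_{\textnormal{bad}}(\cM)$ is in $\sing_{\textnormal{gen}}\cM$, hence by Theorem \ref{theo:entropy-gap} has density below $\lambda(\SS^1\times\RR) + \delta_0$ and all tangent flows there are multiplicity-one spheres or cylinders; this makes the restriction $\cM\lfloor \mathfrak{t}^{-1}((-\infty, T_{\textnormal{bad}}(\cM)))$ an almost regular flow in the sense of \cite[Definition 2.6]{BamlerKleiner}. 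Bamler--Kleiner's multiplicity-one theorem then applies to this restricted flow and yields that the tangent flow at $X$ (a limit of rescalings supported at times $\leq T_{\textnormal{bad}}(\cM)$, i.e.\ lying in the closure of the almost regular region since $V$ is supported at $t<0$) has multiplicity one, i.e.\ $k = 1$. Combining with (ii) and (iii): the tangent flow coincides for $t<0$ with $\cM_\Sigma$, $\Sigma \in \cS^* \setminus \cS^{\textnormal{gen}}$, as claimed. $\qed$
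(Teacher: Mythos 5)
Your route is the same as the paper's: Ilmanen's structure theorem to produce a smooth embedded shrinker with some multiplicity $k$, the Bamler--Kleiner multiplicity-one theorem (made applicable by almost-regularity of $\cM$ up to time $T_{\textnormal{bad}}(\cM)$) to force $k=1$, and the elementary observation that a multiplicity-one plane or element of $\cS^{\textnormal{gen}}$ would contradict $X \in \sing\cM \setminus \sing_{\textnormal{gen}}\cM$ (via unit-regularity, respectively via the uniqueness results in the footnote to Definition \ref{defi:reg-sing-gen}). Your handling of the Bamler--Kleiner step, including the point that only the portion of the flow before $T_{\textnormal{bad}}(\cM)$ needs to be almost regular, matches the paper, which simply cites \cite[Theorem 1.2]{BamlerKleiner} together with \cite[Lemma 7.8]{BamlerKleiner} for the almost-regularity you sketch by hand.

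The genuine gap is in your step (ii). Ilmanen's theorem in \cite{Ilmanen:singularities} concerns tangent flows at the \emph{first singular time} of a flow that is smooth beforehand: smoothness of the flow up to that time is what supplies the genus bound feeding the local Gauss--Bonnet/$L^2$-curvature estimate, which in turn yields smoothness and embeddedness of the limiting shrinker. A first \emph{nongeneric} point $X$ need not occur at the first singular time; $\cM$ may already have passed through (generic, multiplicity-one spherical or cylindrical) singularities before $\mathfrak{t}(X) = T_{\textnormal{bad}}(\cM)$, so the theorem you invoke does not literally apply at $X$. One must propagate Ilmanen's argument through the earlier generic singularities, using non-fattening and the mean convex neighborhood theorem \cite{HershkovitsWhite:nonfattening, ChoiHaslhoferHershkovits}, the a.e.-in-time regularity of flows with only generic singularities \cite{ColdingMinicozzi:sing-generic}, and White's genus monotonicity \cite{White:topology-weak} to retain a genus bound up to $T_{\textnormal{bad}}(\cM)$; this extension is precisely the content of \cite[Proposition 11.3]{CCMS:generic1}, which the paper cites for this step (the same ingredients reappear in the paper's proof of Theorem \ref{theo:main} to obtain the genus bound \eqref{eq:claim-3}). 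With that citation, or a reproduction of that argument, inserted in place of your direct appeal to \cite{Ilmanen:singularities}, your proof is complete and coincides with the paper's.
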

\begin{proof}
	The weaker result, with $\cM_\Sigma$ replaced by $k \cM_\Sigma$, $k \in \NN$, and $\Sigma \in \cS$, was proven in \cite{Ilmanen:singularities} for the first singular time. Modifications of the proof to the first nongeneric singular time were give in the proof of \cite[Proposition 11.3]{CCMS:generic1}.
	
	The fact that $k=1$ and $\Sigma \in \cS^* \setminus \cS^{\textnormal{gen}}$ follows from Bamler--Kleiner's  recent resolution of the multiplicity-one conjecture. In particular, it follows from \cite[Theorem 1.2]{BamlerKleiner} provided we can ensure $\cM$ is what Bamler--Kleiner call ``almost-regular'' on $[0, T_{\textnormal{bad}}(\cM))$. This is justified in \cite[Lemma 7.8]{BamlerKleiner}.
\end{proof}

\section{Proof of Theorem \ref{theo:main}}\label{sec:proof}

\begin{proposition} \label{prop:density-drop}
	Fix $g \in \NN$, $\Lambda > 0$. There exists $\eta_0 = \eta_0(g, \Lambda) > 0$ with the following property:
	
	Let $\Sigma \in \cS^* \setminus \cS^{\textnormal{gen}}$ and $\Omega \subset \RR^3$ be either component of $\RR^3 \setminus \Sigma$. Assume
	\begin{enumerate}
		\item[(a)] $\operatorname{genus}(\Sigma) \leq g$, 
		\item[(b)] $F(\Sigma) \leq \Lambda$.
	\end{enumerate}
	If $\cM$ is any ancient cyclic unit-regular integral Brakke flow with:
	\begin{enumerate}
		\item[(c)] $\lambda(\cM) \leq F(\Sigma)$, 
		\item[(d)] $\supp \cM \cap \mathfrak{t}^{-1}((-\infty, 0)) \subset \cup_{t < 0} \sqrt{-t} \bar \Omega$,
	\end{enumerate}
	then, $\Theta_\cM(X) \leq F(\Sigma) - \eta_0$ for every $X \in (\RR^3 \times \RR) \setminus (\bOh, 0)$.
\end{proposition}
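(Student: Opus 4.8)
The plan is to argue by contradiction using the compactness theorems available to us. Suppose the statement fails for some fixed $g,\Lambda$. Then there are sequences $\Sigma_i \in \cS^* \setminus \cS^{\textnormal{gen}}$ with $\operatorname{genus}(\Sigma_i) \leq g$, $F(\Sigma_i) \leq \Lambda$, choices of component $\Omega_i$ of $\RR^3 \setminus \Sigma_i$, ancient cyclic unit-regular integral Brakke flows $\cM_i$ with $\lambda(\cM_i) \leq F(\Sigma_i)$ and $\supp \cM_i \cap \mathfrak{t}^{-1}((-\infty,0)) \subset \cup_{t<0}\sqrt{-t}\,\bar\Omega_i$, and spacetime points $X_i \neq (\bOh, 0)$ with $\Theta_{\cM_i}(X_i) > F(\Sigma_i) - \tfrac1i$. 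First I would use Theorem \ref{theo:shrinker-compactness} to extract a subsequential limit $\Sigma_i \to \Sigma_\infty \in \cS$ in $C^\infty_{\textnormal{loc}}$ with $F(\Sigma_i) \to F(\Sigma_\infty)$; note $\lambda(\Sigma_\infty) \le \Lambda$ so $\Sigma_\infty$ has locally bounded geometry, and the constancy/Frankel arguments keep things from degenerating. Passing to a further subsequence, the $\Omega_i$ converge to a component $\Omega_\infty$ of $\RR^3 \setminus \Sigma_\infty$ (one has to be slightly careful if $\Sigma_\infty$ is a plane, but (c)–(d) plus Proposition \ref{prop:frankel-shrinker} will eventually exclude the degenerate possibilities). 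Simultaneously, since $\lambda(\cM_i) \le \Lambda$, the flows $\cM_i$ subsequentially converge, $\cM_i \rightharpoonup \cM_\infty$, to an ancient cyclic unit-regular integral Brakke flow with $\lambda(\cM_\infty) \le F(\Sigma_\infty)$ and $\supp \cM_\infty \cap \mathfrak{t}^{-1}((-\infty,0)) \subset \cup_{t<0}\sqrt{-t}\,\bar\Omega_\infty$ (the containment passes to the limit by Hausdorff convergence of supports on compact sets).

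Next I would control the points $X_i$. By Huisken monotonicity and the entropy bound, $\Theta_{\cM_i}(X_i) \le \lambda(\cM_i) \le F(\Sigma_i)$, so in fact $\Theta_{\cM_i}(X_i) \to F(\Sigma_\infty)$. I claim the $X_i$ stay in a compact subset of spacetime away from $(\bOh,0)$: if $|X_i| \to \infty$ or $\mathfrak t(X_i) \to \pm\infty$, then translating $\cM_i$ by $-X_i$ and using that the translated flows still satisfy a one-sided containment (now against a translated–and–possibly–rescaled region), a further tangent-flow/blow-down analysis at the translated origin combined with the entropy bound forces a contradiction with $\Sigma_\infty \notin \cS^{\textnormal{gen}}$ — this is where the geometric characterization, Proposition \ref{prop:geometric-characterization}, enters. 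If instead $X_i \to (\bOh,0)$, parabolically rescale to keep $d_p(X_i,(\bOh,0))$ fixed and run the same argument. So after passing to a subsequence $X_i \to X_\infty \neq (\bOh,0)$ and, by upper semicontinuity of density under Brakke convergence, $\Theta_{\cM_\infty}(X_\infty) \ge \limsup_i \Theta_{\cM_i}(X_i) = F(\Sigma_\infty)$. Since also $\Theta_{\cM_\infty}(X_\infty) \le \lambda(\cM_\infty) \le F(\Sigma_\infty)$, we get equality: $\Theta_{\cM_\infty}(X_\infty) = F(\Sigma_\infty) = \lambda(\cM_\infty)$.

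Now I would examine a tangent flow $\tilde\cM$ to $\cM_\infty$ at $X_\infty$. Because the density equals the entropy of $\cM_\infty$ and $\lambda(\Sigma_\infty) = F(\Sigma_\infty)$ (entropy of a shrinker is attained), Huisken monotonicity pins the flow: $\cM_\infty$ must itself be (for negative times, up to the shift) the shrinking flow $\cM_{\Sigma'}$ associated to some $F$-stationary integral varifold $\Sigma'$ with $F(\Sigma') = F(\Sigma_\infty)$, and moreover — this is the key rigidity — the shrinker structure must be centered so that its support lies in $\cup_{t<0}\sqrt{-t}\,\bar\Omega_\infty$ after the appropriate shift by $X_\infty$. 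Then Proposition \ref{prop:frankel-shrinker} forces $\Sigma' = k\Sigma_\infty$, and by the entropy equality $k=1$. Finally, the containment $(X_\infty + \supp \cM_{\Sigma_\infty}) \cap \mathfrak t^{-1}((-\infty,\min\{0,\mathfrak t(X_\infty)\})) \subset \cup_{t<0}\sqrt{-t}\,\bar\Omega_\infty$ with $X_\infty \neq (\bOh,0)$ is exactly the hypothesis \eqref{eq:geometric-characterization} of Proposition \ref{prop:geometric-characterization}, which yields $\Sigma_\infty \in \cS^{\textnormal{gen}}$. But $\Sigma_\infty$ is a $C^\infty_{\textnormal{loc}}$ limit of $\Sigma_i \in \cS^* \setminus \cS^{\textnormal{gen}}$ with uniformly bounded genus and entropy, and $\cS^{\textnormal{gen}}$ is isolated among such shrinkers (Theorem \ref{theo:entropy-gap}: the $\Sigma_i$ have $\lambda(\Sigma_i) \ge \lambda(\SS^1\times\RR) + \delta_0$, a condition closed under the convergence), so $\Sigma_\infty \notin \cS^{\textnormal{gen}}$ — contradiction.

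\textbf{Main obstacle.} I expect the hardest step to be the compactness bookkeeping in the second paragraph: showing the near-maximal-density points $X_i$ cannot escape to spacetime infinity or collapse into $(\bOh,0)$, since controlling this requires tracking how the one-sided region $\cup_{t<0}\sqrt{-t}\,\bar\Omega_i$ behaves under translation and rescaling, and then invoking Proposition \ref{prop:geometric-characterization} at the limiting translated/rescaled center. Getting the shift of $\cM_\Sigma$ in hypothesis \eqref{eq:geometric-characterization} to line up correctly with the limiting $X_\infty$ — in particular, handling the borderline case $\mathfrak t(X_\infty) > 0$ versus $\le 0$ and the possibility that $\Sigma_\infty$ is a plane — is the delicate part; the rigidity argument in the third paragraph, by contrast, is a fairly standard consequence of Huisken monotonicity once the limit objects are in hand.
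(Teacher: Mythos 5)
Your overall strategy coincides with the paper's: argue by contradiction, use shrinker compactness and Brakke flow compactness to extract limits $\Sigma_\infty$, $\Omega_\infty$, $\cM_\infty$, play upper semicontinuity of density against the entropy bound to force $\Theta_{\cM_\infty}(X_\infty) = F(\Sigma_\infty) = \lambda(\cM_\infty)$, use rigidity in the monotonicity formula to conclude $\cM_\infty$ is a shrinking flow centered at $X_\infty$, identify its shrinker with $\Sigma_\infty$ via Proposition \ref{prop:frankel-shrinker}, and then contradict $\Sigma_\infty \notin \cS^{\textnormal{gen}}$ (which follows from the entropy gap) via Proposition \ref{prop:geometric-characterization}. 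Your third paragraph is essentially the paper's closing argument.

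The genuine gap is the step you yourself flag as the main obstacle: you never establish that the points $X_i$ neither escape to spacetime infinity nor collapse to $(\bOh,0)$, and the workaround you sketch for $|X_i|\to\infty$ --- translate by $-X_i$ and run a blow-down analysis --- does not work as stated, because translation destroys both the self-similarity of the constraint region $\cup_{t<0}\sqrt{-t}\,\bar\Omega_i$ and the centering of the $F$-functional, so neither Proposition \ref{prop:frankel-shrinker} nor Proposition \ref{prop:geometric-characterization} applies to the translated limit without substantial extra work (the translated regions may, for instance, converge to a half-space, and nothing you cite rules out a high-density ancient flow there). The paper disposes of the entire issue in one line, with a tool you already hold: hypotheses (a)--(d), the quantities $\operatorname{genus}(\Sigma_i)$, $F(\Sigma_i)$, $\lambda(\cM_i)$, and the density $\Theta_{\cM_i}(X_i)$ are all invariant under parabolic dilation about the spacetime origin (the region $\cup_{t<0}\sqrt{-t}\,\bar\Omega_i$ is parabolically self-similar), so one simply rescales each counterexample at the outset so that $d_p(X_i,(\bOh,0))=1$; this handles escape and collapse simultaneously and gives $X_i \to X_\infty$ with $d_p(X_\infty,(\bOh,0))=1$, in particular $X_\infty \neq (\bOh,0)$, for free. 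You invoke exactly this rescaling, but only in the collapse case, and so miss that it renders the translation argument unnecessary. A minor further remark: your justification that hypothesis (d) passes to the limit ``by Hausdorff convergence of supports'' is loose; the paper instead uses small compact smooth mean curvature flows in $\RR^3\setminus\bar\Omega$ as barriers together with the avoidance principle, which is the clean way to prevent the limit flow from entering the complementary region.
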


This is our main density-drop result. It quantifies the characterization in Proposition \ref{prop:geometric-characterization}. Subject to appropriate genus and entropy bounds, ancient cyclic unit-regular integral Brakke flows on one side of a spacetime track of a non-generic shrinker have quantitatively lower densities  relative to the density of the non-generic shrinker.

\begin{proof}[Proof of Proposition \ref{prop:density-drop}]
	If not, then there exists a sequence of counterexamples $\Sigma_i, \Omega_i, \cM_i, X_i$ with
	\begin{equation} \label{eq:density-drop-1}
		\Theta_{\cM_i}(X_i) \geq F(\Sigma_i) - \tfrac{1}{i}. 
	\end{equation}
	We will use assumptions (a) and (b) to pass to limits and ensure that assumptions (c) and (d) hold on the limit. In what follows we pass to subsequences as necessary. 
	
	First, to normalize, we assume that
	\begin{equation} \label{eq:density-drop-2}
		d_p(X_i, (\bOh, 0)) = 1
	\end{equation}
	by parabolically rescaling $\cM_i$ without affecting any of the other conditions. 	By the compactness theorem for shrinkers under genus and entropy bounds from Theorem \ref{theo:shrinker-compactness}, and Allard's theorem \cite{Allard}, we deduce that 
	\begin{equation} \label{eq:density-drop-3}
		\Sigma_i \to \Sigma \in \cS^* \text{ in } C^\infty_{\textnormal{loc}},
	\end{equation}
	\begin{equation} \label{eq:density-drop-4}
		F(\Sigma_i) \to F(\Sigma).
	\end{equation}
	It follows from \eqref{eq:density-drop-4} and the Bernstein--Wang entropy gap result from Theorem \ref{theo:entropy-gap} (or by the isolatedness of $\cS^{\textnormal{gen}}$ in $C^\infty_{\textnormal{loc}}$ by Colding--Ilmanen--Minicozzi \cite{ColdingIlmanenMinicozzi}) that
	\begin{equation} \label{eq:density-drop-5}
		\Sigma \not \in \cS^{\textnormal{gen}}.
	\end{equation}
	Of course, we also have
	\begin{equation} \label{eq:density-drop-6}
		X_i \to X, \; d_p(X, (\bOh, 0)) = 1.
	\end{equation}
	Using the compactness of cyclic unit-regular integral Brakke flows (\cite{Ilmanen:elliptic} for integral Brakke flows, \cite{White:Brakke} for unit-regularity, and \cite{White:cyclic} for cyclicity) together with (c) and \eqref{eq:density-drop-4}, we can pass (c) to the limit:
	\begin{equation} \label{eq:density-drop-7}
		\cM_i \rightharpoonup \cM,
	\end{equation}
	\begin{equation} \label{eq:density-drop-8}
		\lambda(\cM) \leq F(\Sigma).
	\end{equation}
	Note that there are only two choices for each of $\Omega_i \subset \RR^3 \setminus \Sigma_i$, so we may pass to a subsequence to obtain a consistent choice among them via \eqref{eq:density-drop-3}. If $\Omega$ is the limiting component of $\RR^3 \setminus \Sigma$, we may use smooth compact mean curvature flows in $\RR^3 \setminus \bar \Omega$ as barriers in the avoidance principle to ensure that (d) passes to the limit, too:
	\begin{equation} \label{eq:density-drop-9}
		\supp \cM \cap \mathfrak{t}^{-1}((-\infty, 0)) \subset \cup_{t < 0} \sqrt{-t} \bar \Omega.
	\end{equation}

	The upper semicontinuity of densities together with \eqref{eq:density-drop-1}, \eqref{eq:density-drop-4}, \eqref{eq:density-drop-6}, \eqref{eq:density-drop-7}, \eqref{eq:density-drop-8}, implies
	\[ \Theta_\cM(X) \geq F(\Sigma) \geq \lambda(\cM). \]
	In particular, $\cM$ is a shrinking flow with spacetime center $X$ due to the monotonicity formula. If $V$ is the $F$-stationary varifold corresponding to some tangent flow to $\cM$ at $-\infty$, then
	\begin{equation} \label{eq:density-drop-10}
		F(V) = F(\Sigma),
	\end{equation}
	and by \eqref{eq:density-drop-9} and the parabolic dilation invariance of its right hand side, 
	\begin{equation} \label{eq:density-drop-11}
		\supp V \subset \bar \Omega.
	\end{equation}
	It follows from Proposition \ref{prop:frankel-shrinker}, \eqref{eq:density-drop-10}, and \eqref{eq:density-drop-11}, that $V = \Sigma$. This forces Proposition \ref{prop:geometric-characterization} into a contradiction with the fact that $\Sigma \not \in \cS^{\textnormal{gen}}$ per \eqref{eq:density-drop-5} and $X \neq (\bOh, 0)$ (since $d_p(X, (\bOh, 0)) = 1$) per \eqref{eq:density-drop-6}. This completes our proof.
\end{proof}

\begin{proof}[Proof of Theorem \ref{theo:main}]
Let $K^\circ$ be the smooth compact domain bounded by $M^\circ$, and let $(K_s)_{s \in (-1, 1)}$ be a smooth deformation of $K_0 := K^\circ$ so that
\begin{equation} \label{eq:foliation-inward}
	s_1 < s_2 \implies K_{s_1} \subset \operatorname{int} K_{s_2}.
\end{equation}
For each $s \in (-1, 1)$, denote $M_s := \partial K_s$ and let $\cM_s$ be any cyclic unit-regular integral Brakke flow with $\cM_s(0) = \cH^2 \lfloor M_s$ (see \cite{Ilmanen:elliptic}).\footnote{In our previous work, we have often had to consider multiple Brakke flows for each fixed $s \in (-1, 1)$. Our more streamlined Claim \ref{clai:density-drop} allows us to consider just a single Brakke flow for our argument.}

We may fix $\Lambda > 0$ so that
\begin{equation} \label{eq:foliation-lambda}
	\lambda(M_{s}) \leq \Lambda \text{ for all } s \in (-1, 1).
\end{equation}

\begin{claim} \label{clai:density-drop}
	Fix $s_0 \in (-1,1)$ and $X_0$ a first nongeneric point for $\cM_{s_0}$ (recall Corollary \ref{coro:first-nongeneric}). If $s_i \to s_0$, and $X_0 \neq X_i \to X_0$ are  first nongeneric points for $\cM_{s_i}$, then 
	\[ \limsup_i \Theta_{\cM_{s_i}}(X_i) \leq \Theta_{\cM_{s_0}}(X_0) - \eta_0, \]
	where $\eta_0$ is as in Proposition \ref{prop:density-drop} with $g = \operatorname{genus}(M_0)$ and $\Lambda$ as in \eqref{eq:foliation-lambda}.
\end{claim}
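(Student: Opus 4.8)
The plan is to distill from the family $(\cM_{s_i})$ a single ancient flow to which Proposition \ref{prop:density-drop} applies, and then transfer the density drop it provides back to the points $X_i$ via upper semicontinuity of the Gaussian density. The shrinker I will feed into Proposition \ref{prop:density-drop} is the one attached to a tangent flow of $\cM_{s_0}$ at $X_0$: by Theorem \ref{theo:ilmanen-bamler-kleiner} (this is the one place the Bamler--Kleiner multiplicity-one input enters), some tangent flow of $\cM_{s_0}$ at $X_0$ is $\cM_\Sigma$ with $\Sigma\in\cS^*\setminus\cS^{\textnormal{gen}}$ and $\Theta_{\cM_{s_0}}(X_0)=F(\Sigma)$. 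The genus of this multiplicity-one tangent shrinker does not exceed that of the initial surface (White's genus monotonicity), so $\operatorname{genus}(\Sigma)\le\operatorname{genus}(M_0)=g$, and since density $\le$ entropy $\le$ initial entropy, $F(\Sigma)=\Theta_{\cM_{s_0}}(X_0)\le\lambda(M_{s_0})\le\Lambda$; thus hypotheses (a), (b) of Proposition \ref{prop:density-drop} will hold. By Theorem \ref{theo:frankel-shrinker} the embedded shrinker $\Sigma$ is connected, hence separates $\RR^3$ into exactly two components, and I take $\Omega$ to be the one into which the outer side of $\cM_{s_0}$ near $X_0$ blows up.

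The ancient flow will be produced by blowing up the $\cM_{s_i}$ at $X_0$ at parabolic scale $\rho_i:=d_p(X_i,X_0)$. After passing to a subsequence I may assume all $s_i$ lie on one side of $s_0$, say $s_i>s_0$, so $K_{s_0}\subset\operatorname{int}K_{s_i}$; by the avoidance principle the supports of $\cM_{s_i}$ and $\cM_{s_0}$ stay disjoint for all time and $\cM_{s_i}$ stays outside the region enclosed by $\cM_{s_0}$ (which is non-fattening for $t<T_{\textnormal{bad}}(\cM_{s_0})$) — in particular $X_i\ne X_0$, so $\rho_i>0$ and $\rho_i\to0$. Passing to a further subsequence, I arrange $\cM_{s_i}\rightharpoonup\cM_\infty$, $\operatorname{ParDil}_{\rho_i^{-1}}(\cM_{s_0}-X_0)\rightharpoonup\cM_\Sigma$, and $\tilde\cM_i:=\operatorname{ParDil}_{\rho_i^{-1}}(\cM_{s_i}-X_0)\rightharpoonup\tilde\cM$; since $\mathfrak{t}(X_0)>0$, $\tilde\cM$ is ancient, and it is cyclic, unit-regular and integral as a weak limit of such. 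Also $\tilde X_i:=\operatorname{ParDil}_{\rho_i^{-1}}(X_i-X_0)$ has $d_p(\tilde X_i,(\bOh,0))=1$, so after a subsequence $\tilde X_i\to\tilde X$ with $\tilde X\ne(\bOh,0)$. Granting hypotheses (c), (d) below, Proposition \ref{prop:density-drop} gives $\Theta_{\tilde\cM}(\tilde X)\le F(\Sigma)-\eta_0$ with $\eta_0=\eta_0(g,\Lambda)$, and then
\[ \limsup_i\Theta_{\cM_{s_i}}(X_i)=\limsup_i\Theta_{\tilde\cM_i}(\tilde X_i)\le\Theta_{\tilde\cM}(\tilde X)\le F(\Sigma)-\eta_0=\Theta_{\cM_{s_0}}(X_0)-\eta_0, \]
using invariance of the Gaussian density under parabolic dilation and translation together with its upper semicontinuity under Brakke convergence.

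To check (d): since $\operatorname{ParDil}_{\rho_i^{-1}}(\supp\cM_{s_0}-X_0)\to\supp\cM_\Sigma$ and $\cM_{s_i}$ lies outside the region enclosed by $\cM_{s_0}$, comparing with smooth compact mean curvature flows in $\RR^3\setminus\bar\Omega$ as barriers — exactly as in the proof of Proposition \ref{prop:density-drop} — shows $\supp\tilde\cM\cap\mathfrak{t}^{-1}((-\infty,0))\subset\cup_{t<0}\sqrt{-t}\,\bar\Omega$. To check (c): by Huisken's monotonicity formula, for any $Y\in\RR^3\times\RR$, $r>0$, and any fixed $R>0$ one has $\Theta_{\tilde\cM}(Y,r)=\lim_i\Theta_{\tilde\cM_i}(Y,r)=\lim_i\Theta_{\cM_{s_i}}(Z_i,\rho_i r)\le\limsup_i\Theta_{\cM_{s_i}}(Z_i,R)\le\Theta_{\cM_\infty}(X_0,R)$, where $Z_i\to X_0$ is the $\cM_{s_i}$-point corresponding to $Y$; letting $R\to0$ and taking the supremum over $(Y,r)$ gives $\lambda(\tilde\cM)\le\Theta_{\cM_\infty}(X_0)$. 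Finally, I claim $\Theta_{\cM_\infty}(X_0)=F(\Sigma)$: for every $T<\mathfrak{t}(X_0)=T_{\textnormal{bad}}(\cM_{s_0})$ and all large $i$, both $\cM_{s_i}$ and $\cM_{s_0}$ have only multiplicity-one spherical and cylindrical singularities on $[0,T]$, hence, by the non-fattening/uniqueness theory recalled in Remark \ref{rema:implications}(a) applied on $[0,T]$, coincide there with the level-set flow of their initial surface; since that flow depends continuously on its initial surface, $\cM_\infty=\cM_{s_0}$ on $[0,\mathfrak{t}(X_0))$, whence $\Theta_{\cM_\infty}(X_0)=\Theta_{\cM_{s_0}}(X_0)=F(\Sigma)$, and (c) follows.

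The hard part is this last identity $\Theta_{\cM_\infty}(X_0)=F(\Sigma)$. A priori the limit flow could carry extra multiplicity at $X_0$ — Proposition \ref{prop:frankel-shrinker} only forces a tangent flow of $\cM_\infty$ there to be $k\cM_\Sigma$ for some $k\in\NN$ — and excluding $k\ge2$ is exactly where the hypothesis that $X_0$ lies at the \emph{first} nongeneric time of each flow involved is used: it forces the $\cM_{s_i}$, and their limit $\cM_\infty$, to agree with $\cM_{s_0}$ below time $\mathfrak{t}(X_0)$. This is the precise sense in which the argument is global in spacetime (the blow-up $\tilde\cM$ is an entire ancient flow and (d) is a spacetime-global one-sidedness condition) yet invokes multiplicity-one only at the first nongeneric time. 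A secondary technical point is the one-sided bookkeeping: verifying that $\RR^3\setminus\Sigma$ has exactly two components and that ``outside $\cM_{s_0}$'' consistently selects a fixed one of them after rescaling.
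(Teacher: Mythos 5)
Your proposal is correct and follows essentially the same route as the paper: rescale about $X_0$ at scale $d_p(X_i,X_0)$, use Theorem \ref{theo:ilmanen-bamler-kleiner} to identify the blow-up of $\cM_{s_0}$ with $\cM_\Sigma$, $\Sigma \in \cS^*\setminus\cS^{\textnormal{gen}}$, verify hypotheses (a)--(d) of Proposition \ref{prop:density-drop} for the blow-up limit of the $\cM_{s_i}$ (genus via White's monotonicity, entropy via the monotonicity-formula chain, one-sidedness via the foliation, avoidance, and smooth convergence of the rescaled base flow), and conclude by upper semicontinuity of density. The only differences are cosmetic -- you argue directly rather than by contradiction, and you spell out the identification $\cM_\infty=\cM_{s_0}$ below time $\mathfrak{t}(X_0)$ (via uniqueness/non-fattening before the first nongeneric time) which the paper leaves implicit in the final inequality of its monotonicity computation.
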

\begin{proof}[Proof of Claim]
	If this were false, then there would exist $\eps > 0$ so that
	\begin{equation} \label{eq:claim-1}
		\Theta_{\cM_{s_i}}(X_i) > \Theta_{\cM_{s_0}}(X_0) - \eta_0 + \eps
	\end{equation}
	for sufficiently large $i$. Translate so that $X_0 = (\bOh, 0)$ and set  $\lambda_i :=  d_p(X_i,(\bOh,0)) > 0$.	

	It follows from the Ilmanen and Bamler--Kleiner theorem on the structure of tangent flows at the first nongeneric singular time, Theorem \ref{theo:ilmanen-bamler-kleiner}, that 
	\begin{equation} \label{eq:claim-bk}
		\tilde \cM_{s_0}^i := \operatorname{ParDil}_{\lambda_i} \cM_{s_0} \rightharpoonup \tilde \cM_{s_0} = \cM_\Sigma \text{ for some } \Sigma \in \cS^* \setminus \cS^{\textnormal{gen}}.
	\end{equation}
	Note that, by the monotonicity formula,
	\begin{equation} \label{eq:claim-2}
		F(\Sigma) = \Theta_{\cM_{s_0}}(X_0) \leq \Lambda.
	\end{equation}
	By the nonfattening of the flow before $X_0$ by Hershkovits--White and Choi--Haslhofer--Hershkovits   \cite{HershkovitsWhite:nonfattening, ChoiHaslhoferHershkovits}, the regularity for a.e.~$t$ of flows with only generic singularities by \cite{White:stratification} (see also \cite{ColdingMinicozzi:sing-generic}), and White's genus monotonicity \cite{White:topology-weak},
	\begin{equation} \label{eq:claim-3}
		\operatorname{genus}(\Sigma) \leq \operatorname{genus}(M_0).
	\end{equation}
		
	Denote 
	\[ \tilde \cM^i := \operatorname{ParDil}_{\lambda_i} \cM_{s_i} \rightharpoonup \tilde \cM, \]
	\[ \operatorname{ParDil}_{\lambda_i} X_i \to \tilde X, \;  d_p(\tilde X,(\bOh,0)) = 1. \]
	The upper semicontinuity of density, \eqref{eq:claim-1}, and \eqref{eq:claim-2} imply
	\begin{equation} \label{eq:claim-4}
		\Theta_{\tilde \cM}(\tilde X) \geq F(\Sigma) - \eta_0 + \eps.
	\end{equation}
	Moreover, by a monotonicity formula argument we explain below,
	\begin{equation} \label{eq:claim-5}
		\lambda(\tilde \cM) \leq F(\Sigma).
	\end{equation}
	Indeed, if $Y \in \RR^3 \times \RR$, $r > 0$, $\rho > 0$, then we have by monotonicity and $\lambda_i \to 0$ that
	\begin{align*} 
		\Theta_{\tilde \cM}(Y, r) 
			& = \lim_i \Theta_{\tilde \cM_i}(Y, r) \\
			& = \lim_i \Theta_{\cM_i}(\operatorname{ParDil}_{\lambda_i} Y, \lambda_i r) \\
			& \leq \lim_i \Theta_{\cM_i}(\operatorname{ParDil}_{\lambda_i} Y, \rho) \\
			& \leq \Theta_{\cM}((\bOh, 0), \rho).
	\end{align*}
	Now send $\rho \to 0$ and then take $\sup$ over $Y$, $r$ to obtain \eqref{eq:claim-5}.
	
	Finally, we also have that
	\begin{equation} \label{eq:claim-6}
		\supp \tilde \cM \cap \mathfrak{t}^{-1}((-\infty, 0)) \subset \cup_{t < 0} \sqrt{-t} \bar \Omega,
	\end{equation}
	where $\Omega \subset \RR^3$ is a component of $\RR^3 \setminus \Sigma$. This follows from \eqref{eq:foliation-inward} combined with the avoidance principle and the locally smooth convergence $\tilde \cM^i_{s_0} \rightharpoonup \cM_\Sigma$ on $\mathfrak{t}^{-1}((-\infty, 0))$ from \eqref{eq:claim-bk} and Brakke's regularity theorem \cite{Brakke}, the latter of which applies since $\Sigma$ is smooth.
	
	Altogether \eqref{eq:claim-2}, \eqref{eq:claim-3}, \eqref{eq:claim-4}, \eqref{eq:claim-5}, \eqref{eq:claim-6}, and $\tilde X \neq (\bOh, 0)$ (since $d_p(\tilde X,(\bOh,0)) = 1$) contradict our choice of $\eta_0$ coming from Proposition \ref{prop:density-drop}. This completes our proof of the claim.
\end{proof}

Now proceed to define, for $\ell \in \{ 0, 1, 2, \ldots \}$, 
\begin{align*}
	\cS_\ell := \bigcup_{s \in [-1,1]} 
		& \{ X \text{ is a first nongeneric singular point for } \cM_s \\
		& \qquad \text{ and } \Theta_{\cM_s}(X) \in [1 + \tfrac12 \ell \eta_0, 1 + \tfrac12 (\ell+1)\eta_0) \}.
\end{align*}
The claim implies that $\cS_\ell$ is discrete. In particular, $\cS_\ell$ is finite since the flows become extinct after a uniform finite time. Therefore, $\cup_\ell \cS_\ell$ is also finite since the union can be taken over a finite set of $\ell$'s by \eqref{eq:foliation-lambda}. Thus, we have that:
\[ \cS = \bigcup_{s \in [-1,1]}  \{ X \text{ is a first nongeneric singular point for } \cM_s \} \]
is finite.

To conclude, choose $s_i \to 0$ so that $\cM_{s_i}$ contains no first nongeneric singular points, and thus no nongeneric singular points at all by Corollary \ref{coro:first-nongeneric}. It is known that, in the absense of nongeneric points, $\cM_{s_i}$ is the  unique cyclic unit-regular integral Brakke flow starting at $M_{s_i}$ (see Remark \ref{rema:implications} (b)). This completes the proof.
\end{proof}

\bibliographystyle{alpha}
\bibliography{main}

\end{document}